\newcommand{\End}{\mathrm{End}}
\newcommand{\id}{\mathrm{id}}
\newcommand{\Res}{\mathrm{Res}}
\newcommand{\CA}{\mathrm{CA}}
\newcommand{\ICA}{\mathrm{ICA}}
\newcommand{\GCA}{\mathrm{GCA}}
\newcommand{\IGCA}{\mathrm{IGCA}}
\newcommand{\Hom}{\mathrm{Hom}}
\newcommand{\Fix}{\mathrm{Fix}}
\newcommand{\Aut}{\mathrm{Aut}}
\newcommand{\op}{\mathrm{op}}
\theoremstyle{plain}
\newtheorem{corollary}{Corollary}
\newtheorem{lemma}{Lemma}
\newtheorem{proposition}{Proposition}
\newtheorem{theorem}{Theorem}
\theoremstyle{definition}
\newtheorem{definition}{Definition}
\newtheorem{example}{Example}
\newtheorem{question}{Question}
\begin{document}

\title{A generalization of cellular automata over groups}
\author[1]{A. Castillo-Ramirez\footnote{Email: alonso.castillor@academicos.udg.mx}}
\author[2]{M. Sanchez-Alvarez}
\author[1]{A. Vazquez-Aceves}
\author[1]{A. Zaldivar-Corichi} 
\affil[1]{Centro Universitario de Ciencias Exactas e Ingenier\'ias, Universidad de Guadalajara, M\'exico.}
\affil[2]{Centro Universitario de Los Valles, Universidad de Guadalajara, M\'exico.}

\maketitle

\begin{abstract}
Let $G$ be a group and let $A$ be a finite set with at least two elements. A cellular automaton (CA) over $A^G$ is a function $\tau : A^G \to A^G$ defined via a finite memory set $S \subseteq G$ and a local function $\mu :A^S \to A$. The goal of this paper is to introduce the definition of a generalized cellular automaton (GCA) $\tau : A^G \to A^H$, where $H$ is another arbitrary group, via a group homomorphism $\phi : H \to G$. Our definition preserves the essence of CA, as we prove analogous versions of three key results in the theory of CA: a generalized Curtis-Hedlund Theorem for GCA, a Theorem of Composition for GCA, and a Theorem of Invertibility for GCA. When $G=H$, we prove that the group of invertible GCA over $A^G$ is isomorphic to a semidirect product of $\Aut(G)^{\op}$ and the group of invertible CA. Finally, we apply our results to study automorphisms of the monoid $\CA(G;A)$ consisting of all CA over $A^G$. In particular, we show that every $\phi \in \Aut(G)$ defines an automorphism of $\CA(G;A)$ via conjugation by the invertible GCA defined by $\phi$, and that, when $G$ is abelian, $\Aut(G)$ is embedded in the outer automorphism group of $\CA(G;A)$. 
 \\

\textbf{Keywords:} cellular automata; Curtis-Hedlund theorem; monoid of cellular automata; outer automorphism group.       
\end{abstract}
\section{Introduction}\label{intro}

Cellular automata are functions between prodiscrete topological spaces with the key features of being determined by a finite memory set and a fixed local function. They have become fundamental objects in several areas of mathematics, such as symbolic dynamics, complexity theory, and complex systems modeling, and its theory has flourished due to its diverse connections with group theory, topology, and dynamics (e.g. see the influential book \cite{CSC10} and references therein). 

If $G$ is a group and $A$ is a finite set, known as an alphabet, the \emph{configuration space} over $G$ and $A$, denoted by $A^G$, is the set of all functions $x : G \to A$. We endow $A^G$ with the \emph{prodiscrete topology}, i.e. the product topology of the discrete topology of $A$, and with the \emph{shift action} of $G$ on $A^G$ given by 
\[ g \cdot x(h) := x(g^{-1}h), \quad \forall x \in A^G, g,h \in G. \]
A \emph{cellular automaton} over $A^G$ is a function $\tau : A^G \to A^G$ such that there is a finite subset $S \subseteq G$, called a \emph{memory set} of $\tau$, and a \emph{local function} $\mu : A^S \to A$ satisfying
\begin{equation}\label{defCA}
\tau(x)(g) = \mu (( g^{-1} \cdot x) \vert_{S}),  \quad \forall x \in A^G, g \in G. 
\end{equation}

The goal of this paper is to generalize the above definition in order to allow the existence of cellular automata from $A^G$ to $A^H$, with $G$ and $H$ two arbitrary groups. We believe this is a meaningful question, not only because it enriches the theory of cellular automata, but also because it extends their applicability to problems that consider the interaction between configuration spaces over different groups, or over a twisted version of the same group. 

For any group homomorphism $\phi : H \to G$, a \emph{$\phi$-cellular automaton} is a function $\tau : A^G \to A^H$ such that there exists a memory set $S \subseteq G$ and a local function $\mu :A^S \to A$ satisfying 
\begin{equation}\label{defGCA}
\tau(x)(h) = \mu (( \phi(h^{-1}) \cdot x) \vert_{S}),  \quad \forall x \in A^G, h \in H. 
\end{equation}

We remark that our generalized definition goes in a different sense than the so-called \emph{sliding block codes} \cite[Sec. 1.5]{LM95}. These latter functions may be defined between two arbitrary \emph{subshifts} $X$ and $Y$ of $A^G$ (i.e. closed and $G$-invariant subsets), but the group acting in the domain and codomain must be the same.  

Our generalized definition of celular automata may be further generalized, for example, by considering a  different alphabet in the codomain of the function, by considering different subshifts in the domain and codomain, or by dropping the assumption that the alphabet $A$ is finite. In these situations, it is straightforward to obtain analogous results to some of the ones obtained in this paper; for example, if $A$ is not necessarily finite, we may use the prodiscrete uniform structure of $A^G$, as in \cite[Sec. 1.9]{CSC10}, to obtain a generalization of Curtis-Hedlund theorem. However, we believe that these additional generalizations may obscure the essence of our study, so we have decided not to include them in our definition. 

The structure of this paper is as follows. In Section 2, prove that every $\phi$-cellular automaton $\tau : A^G \to A^H$ is \emph{$\phi$-equivariant} in the sense that 
\[ \tau( \phi(h) \cdot x) = h \cdot \tau(x), \quad \forall h \in H, x \in A^G. \]
When $G=H$, $\id_G$-equivariance coincides with the usual property of $G$-equivariance that is satisfied by classical cellular automata. The notion of $\phi$-equivariance much resembles the concept of \emph{homomorphism of group actions} \cite[Def. 2.1]{GLM}, except that the direction of the group homomorphism $\phi : H \to G$ is reversed. As a next step, we prove three results that are analogous to three key theorems in the theory of classical cellular automata: 
\begin{enumerate}
\item \emph{Generalized Curtis-Hedlund Theorem:} A function $\tau : A^G \to A^H$ is a $\phi$-cellular automaton if and only if it is $\phi$-equivariant and continuous. 

\item \emph{Theorem of Composition:} The composition $\sigma \circ \tau$ of a $\psi$-cellular automaton $\sigma$ with memory set $S$ with a $\phi$-cellular automaton $\tau$ with memory set $T$ is a $(\phi \circ \psi)$-cellular automaton with memory set $\phi(S)T$.

\item \emph{Theorem of Invertibility:} A $\phi$-cellular automaton $\tau : A^G \to A^H$ is \emph{invertible} (in the sense that there exits a group homomorphism $\psi : G \to H$ and a $\psi$-cellular automaton $\sigma : A^H \to A^G$ such that $\tau \circ \sigma = \id_{A^H}$ and $\sigma \circ \tau = \id_{A^G}$) if and only if $\tau$ is bijective. 
\end{enumerate}

In Section 3, we focus on the monoid $\GCA(G;A)$, consisting of the set of all $\phi$-cellular automata $\tau : A^G \to A^G$ for some $\phi \in \End(G)$, equipped with composition of functions, and its group of invertible elements $\IGCA(G;A)$. We show that $\GCA(G;A)$ has a submonoid isomorphic to $\End(G)^{\op}$, and that every $\tau \in \GCA(G;A)$ may be expressed as the composition of an element in this isomorphic copy of  $\End(G)^{\op}$, and an element in the monoid $\CA(G;A)$ of classical cellular automata over $A^G$. Furthermore, we show that the group $\IGCA(G;A)$ is isomorphic to the semidirect product of $\Aut(G)^{\op}$ and the group of classical invertible cellular automata $\ICA(G;A)$.  

Finally, in Section 4, we study automorphisms of $\CA(G;A)$. Clearly, every element of $\ICA(G;A)$ induces by conjugation an \emph{inner automorphism} of $\CA(G;A)$, and this defines the normal subgroup $\text{Inn}(\CA(G;A))$ of $\Aut(\CA(G;A))$. It turns out that conjugation by an invertible $\phi$-cellular automaton also defines an automorphism of $\CA(G;A)$; such is the case, for example, of the so-called \emph{mirrored rule}, or \emph{reflection}, which is widely used in the study of elementary cellular automata over $A^{\mathbb{Z}}$ (e.g., see \cite[Ch. 20]{Wolfram}). We establish that, when $G$ is abelian, the outer automorphism group $\text{Out}(\CA(G;A)) := \Aut(\CA(G;A)) / \text{Inn}(\CA(G;A))$ contains a subgroup isomorphic to $\Aut(G)$, and we pose the following question: perhaps under some additional hypothesis, is it possible to show that $\text{Out}(\CA(G;A))$ is isomorphic to $\Aut(G)$, or can we always find an outer automorphism of $\CA(G;A)$ that is not induced by an automorphism of $G$?


\section{A generalized definition} 

We assume that the reader knows the fundamentals of group theory, topology and the theory of classical cellular automata over groups (see \cite[Ch. 1]{CSC10}). 

For the rest of the paper, let $A$ be a finite set, and let $G$ and $H$ be groups. As the case $\vert A \vert = 1$ is trivial and degenerate, we shall assume that $\vert A \vert \geq 2$ and that $\{ 0,1 \} \subseteq A$. Denote by $\Hom(H,G)$ the set of all group homomorphisms from $H$ to $G$.

\begin{definition}
For any $\phi \in \Hom(H,G)$, a $\phi$-\emph{cellular automaton} from $A^G$ to $A^H$ is a function $\tau : A^G \to A^H$ such that there is a finite subset $S \subseteq G$, called a \emph{memory set} of $\tau$, and a \emph{local function} $\mu : A^S \to A$ satisfying
    \[ \tau(x)(h) = \mu (( \phi(h^{-1}) \cdot x) \vert_{S}),  \quad \forall x \in A^G, h \in H.  \]
\end{definition}

\begin{example}
Every cellular automaton from $A^G$ to $A^G$ is an $\id_G$-cellular automaton, where $\id_{G}$ is the identity function on $G$. However, note that we may define $\phi$-cellular automata from $A^G$ to $A^G$, where $\phi$ is a nontrivial element of $\End(G) := \Hom(G,G)$.  
\end{example}

\begin{example}
The homomorphism that defines a $\phi$-cellular automaton may be not unique. For example, for any $S \subseteq G$, let $\mu : A^S \to A$ be a constant function. Then, every $\phi \in \Hom(H,G)$ defines the same $\phi$-cellular automaton with memory set $S$ and local function $\mu$.  
\end{example}

\begin{example}
Let $G = \mathbb{Z}$, $H = \mathbb{Z}^2$ and $S = \{ -1,0,1 \} \subseteq \mathbb{Z}$. Recall that a configuration $x \in A^{\mathbb{Z}}$ may be seen as a bi-infinite sequence $x = \dots x_{-1}, x_0, x_1, \dots$. 

Consider the homomorphism $\phi : \mathbb{Z}^2 \to \mathbb{Z}$ given by $\phi(a,b) = a+b$, for all $(a,b) \in \mathbb{Z}^2$. Then, for any function $\mu : A^S \to A$, the $\phi$-cellular automaton $\tau : A^{\mathbb{Z}} \to A^{\mathbb{Z}^2}$ with memory set $S$ and local function $\mu$ is given by
\[ \tau(x)(a,b) = \mu( x_{a+b- 1}, x_{a+b}, x_{a+b+1}).   \] 
for all $x \in A^\mathbb{Z}$ and $(a,b) \in \mathbb{Z}^2$. 
\end{example}

\begin{example}\label{star}
For every $\phi \in \Hom(H,G)$, define $\phi^{\star} : A^G \to A^H$ by 
\[ \phi^\star(x) = x \circ \phi, \quad \forall x \in A^G. \]
Observe that for any $h \in H$,
\[ \phi^\star(x)(h) = x \circ \phi(h) = (\phi(h^{-1}) \cdot x)( e_G ) , \]
where $e_G$ is the identity of $G$. Hence, $\phi^\star$ is a $\phi$-cellular automaton with memory set $S=\{ e_G \}$ and local function $\mu = \id_A$.  
\end{example}

If $K$ is a subgroup of $G$, recall that a configuration in $A^G$ is $K$-periodic if $k \cdot x = x$, for all $k \in K$. Denote the set of all $K$-periodic configurations in $A^G$ by $\Fix(K)$. 

\begin{example}\label{periodic}
Consider the cyclic groups $G = \mathbb{Z}_n$ and $H = \mathbb{Z}$. Let $\phi : \mathbb{Z} \to \mathbb{Z}_n$ be the homomorphism given by $\phi(k) = k \mod(n)$. Then, $\phi^{\star} : A^{\mathbb{Z}_n} \to A^{\mathbb{Z}}$ sends $n$-tuples of $A$ to $n \mathbb{Z}$-periodic configurations. For example, if $n=3$, then 
\[ \phi^\star(1,0,0) = \dots 1, 0, 0, 1, 0, 0 , 1,0,0 \dots. \]   
\end{example}

Classical cellular automata have the important property of being $G$-equivariant, i.e. $\tau(g \cdot x) = g \cdot \tau(x)$ for every $g \in G$, $x \in A^G$. However, $\phi$-celllular automata satisfy a more general property.

\begin{definition}\label{equiv}
Let $\phi \in \Hom(H,G)$. A function $\tau : A^G \to A^H$ is called \emph{$\phi$-equivariant} if
    \[ h\cdot \tau(x) = \tau(\phi(h)\cdot x),  \quad \forall x \in A, h \in H.  \]
 \end{definition}
 
The above definition much resembles the following concept: if $X$ is a $G$-set and $Y$ is a $H$-set, a \emph{homomorphism of group actions} is a function $f : X \to Y$ together with a group homomorphism $\alpha : G \to H$ such that $f(g \cdot x) = \alpha(g) \cdot f(x)$, for all $g \in G$, $x \in X$ (see \cite[Def. 2.1]{GLM}). Note that in Definition \ref{equiv}, the direction of the group homomorphism is reversed. However, these two concepts coincide in the case of isomorphisms: a $G$-set $X$ is isomorphic to an $H$-set $Y$ if and only if there exists an isomorphism $\phi : H \to G$ and a $\phi$-equivariant bijection $\tau : X \to Y$ (c.f. \cite[p. 17]{Dixon}). 

\begin{lemma}\label{le-equivariance}
Every $\phi$-cellular automaton is $\phi$-equivariant.
\end{lemma}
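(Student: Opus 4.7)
The plan is to carry out a direct pointwise calculation: fix a $\phi$-cellular automaton $\tau : A^G \to A^H$ with memory set $S$ and local function $\mu$, fix $h \in H$ and $x \in A^G$, and then show that the two configurations $h \cdot \tau(x)$ and $\tau(\phi(h) \cdot x)$ in $A^H$ agree at every $h' \in H$.

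First I would evaluate the left-hand side. By the definition of the shift action on $A^H$ we have $(h \cdot \tau(x))(h') = \tau(x)(h^{-1} h')$, and then the defining formula of a $\phi$-cellular automaton gives
\[ (h \cdot \tau(x))(h') = \mu\bigl((\phi((h^{-1}h')^{-1}) \cdot x)\vert_{S}\bigr) = \mu\bigl((\phi((h')^{-1} h) \cdot x)\vert_{S}\bigr). \]
Next I would evaluate the right-hand side: applying the same formula to $\tau$ at the configuration $\phi(h) \cdot x$ yields
\[ \tau(\phi(h) \cdot x)(h') = \mu\bigl((\phi((h')^{-1}) \cdot (\phi(h) \cdot x))\vert_{S}\bigr). \]

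To reconcile the two expressions I would invoke two structural facts: that the shift of $G$ on $A^G$ is a group action, so $\phi((h')^{-1}) \cdot (\phi(h) \cdot x) = (\phi((h')^{-1})\phi(h)) \cdot x$, and that $\phi$ is a group homomorphism, so $\phi((h')^{-1})\phi(h) = \phi((h')^{-1} h)$. Combining these, the argument of $\mu$ on both sides becomes $(\phi((h')^{-1} h) \cdot x)\vert_S$, giving equality of the two configurations at $h'$. Since $h'$ was arbitrary, this proves $h \cdot \tau(x) = \tau(\phi(h)\cdot x)$ for every $h \in H$ and $x \in A^G$, which is exactly $\phi$-equivariance.

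There is no real obstacle here; the proof is a one-line unwinding of definitions, with the only nontrivial ingredient being the compatibility of the shift action with the group homomorphism $\phi$. This is the direct generalization of the classical verification that ordinary cellular automata are $G$-equivariant, and the case $\phi = \id_G$ recovers that statement.
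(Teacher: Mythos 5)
Your proof is correct and is essentially identical to the paper's: both unwind the definitions pointwise and use the group action property of the shift together with the fact that $\phi$ is a homomorphism to identify $\phi((h')^{-1})\phi(h)$ with $\phi((h')^{-1}h)$. The only difference is notational (the paper writes $k$ for your $h$ and $h$ for your $h'$).
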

\begin{proof}
    Let $\tau : A^G \to A^H$ be a $\phi$-cellular automaton, $S\subseteq G$ a memory set of $\tau$, and $\mu : A^S \to A$ its local function. For all $x\in A^G$ and $k,h\in H$, we have
    \begin{eqnarray*}
        k\cdot\tau(x)(h) &=& \tau(x)(k^{-1}h)\\
        &=& \mu (( \phi(h^{-1})\phi(k) \cdot x) \vert_{S}) \\
        & =&  \mu (( \phi(h^{-1})\cdot(\phi(k) \cdot x)) \vert_{S}) \\
        &=& \tau(\phi(k)\cdot x)(h)
    \end{eqnarray*}
\end{proof}

Example \ref{periodic} may be generalized to show that $\phi$-cellular automata always map configurations to $\ker(\phi)$-periodic configurations (c.f. \cite[Prop. 1.3.7]{CSC10}).

\begin{proposition}\label{fix}
Let $\phi \in \Hom(H,G)$ and let $N := \ker(\phi)$. 
\begin{enumerate}
\item If $\tau : A^G \to A^H$ is a $\phi$-equivariant function, then $\text{Im}(\tau) \subseteq \Fix(N)$. 
\item If $\phi^\star : A^G \to A^H$ is as defined in Example \ref{star}, then $\text{Im}(\phi^\star) = \Fix(N)$.
\end{enumerate}
\end{proposition}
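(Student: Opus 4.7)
The plan is to handle both parts by unpacking the interaction between $\phi$-equivariance and the shift action of $N$ on $A^H$. For part (1), I would argue directly from the definition: given $y = \tau(x) \in \text{Im}(\tau)$ and $n \in N$, $\phi$-equivariance yields
\[ n \cdot y = n \cdot \tau(x) = \tau(\phi(n) \cdot x) = \tau(e_G \cdot x) = \tau(x) = y, \]
where the middle equality uses $\phi(n) = e_G$ because $n \in \ker(\phi)$. This gives the desired inclusion in two lines.

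For part (2), the inclusion $\text{Im}(\phi^\star) \subseteq \Fix(N)$ is free from part (1): by Example \ref{star}, $\phi^\star$ is a $\phi$-cellular automaton, so Lemma \ref{le-equivariance} tells us it is $\phi$-equivariant. The substantive content lies in the reverse inclusion $\Fix(N) \subseteq \text{Im}(\phi^\star)$. My first move would be to translate $N$-periodicity into coset language: unwinding $n \cdot y = y$ gives $y(nh) = y(h)$ for all $n \in N$ and $h \in H$, so $y$ is constant on each coset of $N$ (note $N$ is normal in $H$, being a kernel). Given $y \in \Fix(N)$, I would then define $x \in A^G$ by setting $x(\phi(h)) := y(h)$ for $h \in H$, and extending $x$ arbitrarily (say, by $0$) on $G \setminus \phi(H)$. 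A direct check then yields $\phi^\star(x)(h) = x(\phi(h)) = y(h)$ for every $h \in H$, so $y = \phi^\star(x) \in \text{Im}(\phi^\star)$.

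The only delicate point — and the main obstacle — is verifying that $x$ is well defined on $\phi(H)$: if $\phi(h_1) = \phi(h_2)$, then $h_1 h_2^{-1} \in N$, placing $h_1$ and $h_2$ in the same coset of $N$, and the coset reformulation of $N$-periodicity forces $y(h_1) = y(h_2)$. This is really the only spot where the hypothesis $y \in \Fix(N)$ enters the reverse inclusion, and it is exactly what the preliminary translation of periodicity into coset language was set up to handle. Once this well-definedness check is in place, both inclusions drop out with essentially no further computation.
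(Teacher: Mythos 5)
Your proposal is correct and follows essentially the same route as the paper: part (1) is the same two-line equivariance computation, and for part (2) you construct the same preimage (defined via $x(\phi(h)) := y(h)$ on $\phi(H)$ and $0$ elsewhere), with your hands-on well-definedness check ($\phi(h_1)=\phi(h_2)\Rightarrow h_1h_2^{-1}\in N$) simply unpacking the paper's appeal to the First Isomorphism Theorem. No gaps.
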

\begin{proof}
\begin{enumerate}
\item For any $n \in N$ and $x \in A^G$, we have by $\phi$-equivariance 
\[ n \cdot \tau(x) = \tau( \phi(n) \cdot x) = \tau(e \cdot x) = \tau(x). \]
This shows that $\tau(x) \in \Fix(N)$, for all $x \in A^G$. 

\item The first part of this proposition shows that $\text{Im}(\phi^\star) \subseteq \Fix(N)$. Now, let $x \in \Fix(N)$. Consider the quotient group $H /N$, and observe that for any $nh \in Nh$ we have 
\[ x(nh) = (n^{-1} \cdot x) (h) = x(h). \]
This means that $x$ is constant on each coset $Nh \in H/N$. By the First Isomorphism Theorem, there is an isomorphism $\Psi : \text{Im}(\phi) \to H/N$ given by $\Psi(\phi(h)) = Nh$ for all $h \in H$. Define $z \in A^G$ as follows: if $g \in \text{Im}(\phi)$, then $z(g) :=x(h)$, where $h \in H$ is any representative of the coset $\Psi(g)$, and if $g \in G - \text{Im}(\phi)$, then $z(g) = 0$. We claim that $z$ is a preimage of $x$ under $\phi^\star$ because for any $h \in H$, 
\[ \phi^{\star}(z) (h) = z( \phi(h) ) = x(h).    \]  
This shows that $x \in \text{Im}(\phi^\star)$, and the result follows. 
\end{enumerate}
\end{proof}

For each $h \in H$, let $\pi_h : A^H \to A$ be the projection map defined by $\pi_h(x) := x(h)$, for all $x \in A^H$. Projections are always continuous functions and they satisfy that a map $\tau : A^G \to A^H$ is continuous if and only if $\pi_h \circ \tau : A^G \to A$ is continuous for all $h \in H$. 

\begin{lemma}\label{le-continuity}
	Every $\phi$-cellular automaton is continuous.
\end{lemma}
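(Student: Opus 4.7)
The plan is to use the criterion stated immediately before the lemma: it is enough to show that $\pi_h \circ \tau : A^G \to A$ is continuous for every $h \in H$. Since the target $A$ is finite and discrete, continuity of $\pi_h \circ \tau$ amounts to showing that the preimage of each $a \in A$ is open in $A^G$ under the prodiscrete topology, and by the definition of $\tau$ we have the explicit expression
\[ (\pi_h \circ \tau)(x) = \mu((\phi(h^{-1}) \cdot x)\vert_S), \]
which I would decompose into three steps that are each transparently continuous.

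First, I would observe that the shift map $\sigma_h : A^G \to A^G$, $x \mapsto \phi(h^{-1}) \cdot x$, is a homeomorphism of $A^G$, since in coordinate form it merely permutes the factors of the product $A^G = \prod_{g \in G} A$. Second, the restriction map $\rho_S : A^G \to A^S$, $y \mapsto y\vert_S$, is continuous because it is, coordinatewise, the projection onto the finitely many factors indexed by $S$. Third, since $A$ is finite and $S$ is finite, $A^S$ carries the discrete topology, so the local function $\mu : A^S \to A$ is trivially continuous. Composing, $\pi_h \circ \tau = \mu \circ \rho_S \circ \sigma_h$ is continuous.

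This suffices, by the criterion recalled above, to conclude that $\tau$ itself is continuous. There is no real obstacle here: the only subtle point is to remember that the finiteness of $A$ and of $S$ forces $A^S$ to be discrete, so that no hypothesis on $\mu$ beyond being a set-theoretic function is needed for its continuity.
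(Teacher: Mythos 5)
Your proof is correct and follows essentially the same route as the paper: both decompose $\pi_h \circ \tau$ as the composite of the shift by $\phi(h^{-1})$, the restriction to $S$, and the local function $\mu$, and then invoke the universal property of the product topology. Your added justifications for the continuity of each factor (the shift as a coordinate permutation, the restriction as a projection onto finitely many factors, and the discreteness of $A^S$) merely spell out what the paper leaves as ``easy to check.''
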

\begin{proof}
	Let $\tau : A^G \to A^H$ be a $\phi$-cellular automaton with memory set $S \subseteq G$ and local function $\mu : A^S \to A$. Note that for all $x\in A^G$ and $h\in H$, we have
	\[ (\pi_h\circ\tau)(x)=\tau(x)(h)=\mu (( \phi(h^{-1}) \cdot x)\vert_{S})=\mu\circ\Res_S\circ\varphi_{\phi(h^{-1})}(x), \]
where $\Res_S:A^G\to A^S$ is the restriction function, and $\varphi_g:A^G\to A^G$ is defined by $\varphi_g(x) = g \cdot x$, for all $g \in G$, $x \in A^G$. It is easy to check that $\mu$, $\Res_S$, and $\varphi_g$ are all continuous, so $\pi_h\circ\tau$ must be continuous for every $h \in H$. This implies that $\tau$ is continuous. 
\end{proof}

Recall from \cite[Sec. 1.2]{CSC10} that a neighborhood base of $x \in A^G$ is given by the sets
\[  V(x,S) := \{ y \in A^G : x \vert_S = y \vert_S \}, \]
where $S$ runs among all finite subsets of $G$. The next one is a technical lemma that is useful to prove the main results of this section. 

\begin{lemma}\label{le-constant}
Let $S$ be a finite subset of $G$ and $\phi \in \Hom(H,G)$. A $\phi$-equivariant function $\tau : A^G \to A^H$ is a $\phi$-cellular automaton with memory set $S$ if and only if the function $\pi_{e_G} \circ \tau : A^G \to A$ is constant on $V(x,S)$, for every $x \in A^G$. 
\end{lemma}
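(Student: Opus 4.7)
The plan is to handle the two implications separately, with $\phi$-equivariance acting as the key bridge that lets the value of $\tau$ at an arbitrary $h \in H$ be recovered from the value at the identity $e_H$.

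For the forward direction, I would suppose $\tau$ is a $\phi$-cellular automaton with memory set $S$ and local function $\mu$. Evaluating the defining formula at $h = e_H$ gives $\tau(x)(e_H) = \mu((\phi(e_H^{-1})\cdot x)\vert_S) = \mu(x\vert_S)$, which depends only on $x\vert_S$. Hence $\pi_{e_H} \circ \tau$ is constant on each $V(x,S)$.

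For the converse, assume $\tau$ is $\phi$-equivariant and that $\pi_{e_H}\circ\tau$ is constant on every $V(x,S)$. I would define a candidate local function $\mu : A^S \to A$ by choosing, for each $y \in A^S$, any extension $\tilde y \in A^G$ with $\tilde y\vert_S = y$, and setting $\mu(y) := \tau(\tilde y)(e_H)$. Well-definedness is the one place where the hypothesis is genuinely used: two extensions of the same $y$ agree on $S$, hence lie in a common neighborhood of the form $V(\cdot,S)$, on which $\pi_{e_H}\circ\tau$ is constant by assumption.

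It then remains to verify $\tau(x)(h) = \mu((\phi(h^{-1})\cdot x)\vert_S)$ for all $x \in A^G$ and $h \in H$. Unwinding the shift action yields $\tau(x)(h) = (h^{-1}\cdot\tau(x))(e_H)$, and applying $\phi$-equivariance (Definition \ref{equiv}, with $h$ replaced by $h^{-1}$) rewrites this as $\tau(\phi(h^{-1})\cdot x)(e_H)$. Since $\phi(h^{-1})\cdot x$ is itself an extension of its own restriction to $S$, the construction of $\mu$ gives $\tau(\phi(h^{-1})\cdot x)(e_H) = \mu((\phi(h^{-1})\cdot x)\vert_S)$, which is exactly the desired formula. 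The argument is essentially a careful unwinding of definitions; the only conceptual input is the well-definedness of $\mu$, which is built directly into the hypothesis, so I do not anticipate any real obstacle.
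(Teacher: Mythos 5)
Your proposal is correct and follows essentially the same route as the paper: evaluate the defining formula at the identity for the forward direction, and for the converse define $\mu$ on $A^S$ via an arbitrary extension and use $\phi$-equivariance to transport the value at $h$ back to the identity. The only differences are cosmetic — you make the well-definedness of $\mu$ explicit (the paper leaves it implicit in the phrase ``any configuration such that $z'\vert_S = z$'') and you write $\pi_{e_H}$ where the paper writes $\pi_{e_G}$, which is arguably the more accurate notation since $\tau(x)$ lives in $A^H$.
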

\begin{proof}
Suppose that $\tau$ is a $\phi$-cellular automaton with memory set $S$. Let $\mu : A^S \to A$ be the local function of $\tau$. Then, for every $x \in A^G$ and $y \in V(x,S)$ we have
\[ \pi_{e_G} \circ \tau (x)   = \tau(x)(e_G) = \mu( x \vert_S) = \mu( y \vert_S) = \tau(y)(e_G) = \pi_{e_G} \circ \tau (y). \]   
Therefore, $\pi_{e_G} \circ \tau$ is constant on $V(x,S)$, for every $x \in A^G$

On the other hand, suppose that $\pi_{e_G} \circ \tau$ is constant on $V(x,S)$, for every $x \in A^G$. Define a function $\mu : A^S \to A$ as follows: for every $z \in A^S$, let $\mu(z) := \tau(z^\prime)(e_G)$, where $z^\prime \in A^G$ is any configuration such that $z^\prime \vert_{S} = z$. Now, for every $x \in A^G$ and $h \in H$, we have by $\phi$-equivariance,
\[ \tau(x)(h) = h^{-1} \cdot \tau(x)(e_G)  = \tau( \phi(h^{-1}) \cdot x) (e_G) = \mu( ( \phi(h^{-1}) \cdot x) \vert_S). \]     
This shows that $\tau$ is a $\phi$-cellular automaton with memory set $S$. 
\end{proof}

The following is a generalized version of Curtis-Hedlund Theorem \cite[Theorem 1.8.1]{CSC10}. 

\begin{theorem}\label{Curtis-Hedlund}
	Let $\phi \in \Hom(H,G)$. A function $\tau : A^G \to A^H$ is a $\phi$-cellular automaton if and only if $\tau$ is continuous and $\phi$-equivariant.
\end{theorem}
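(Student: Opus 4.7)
The forward direction is essentially already done: Lemma \ref{le-equivariance} shows every $\phi$-cellular automaton is $\phi$-equivariant, and Lemma \ref{le-continuity} shows it is continuous. So I would simply cite these two lemmas and move on.

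For the converse, the plan is to reduce the problem to Lemma \ref{le-constant}: it suffices to exhibit a finite $S \subseteq G$ such that $\pi_{e_G} \circ \tau$ is constant on $V(x,S)$ for every $x \in A^G$. Since $\tau$ is continuous by hypothesis and the projection $\pi_{e_G}$ is always continuous, the composition $\pi_{e_G} \circ \tau : A^G \to A$ is continuous into the discrete space $A$. For each $x \in A^G$, the singleton $\{\tau(x)(e_G)\}$ is open in $A$, so its preimage is an open neighborhood of $x$, which by definition of the prodiscrete topology contains some basic open set $V(x, S_x)$ with $S_x$ finite; on this set $\pi_{e_G} \circ \tau$ takes the constant value $\tau(x)(e_G)$.

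Next I would invoke compactness of $A^G$ (Tychonoff, since $A$ is finite hence compact) applied to the open cover $\{V(x,S_x) : x \in A^G\}$ to extract a finite subcover $V(x_1,S_{x_1}), \dots, V(x_n,S_{x_n})$, and set $S := \bigcup_{i=1}^n S_{x_i}$, which is finite. I would then verify the key claim: for every $x \in A^G$, $\pi_{e_G} \circ \tau$ is constant on $V(x,S)$. Given $y \in V(x,S)$, $x$ lies in some $V(x_i, S_{x_i})$ by the covering property; since $S \supseteq S_{x_i}$ and $y|_S = x|_S$, also $y|_{S_{x_i}} = x|_{S_{x_i}} = x_i|_{S_{x_i}}$, so $y \in V(x_i, S_{x_i})$ as well, forcing $\tau(y)(e_G) = \tau(x_i)(e_G) = \tau(x)(e_G)$. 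Lemma \ref{le-constant} then concludes that $\tau$ is a $\phi$-cellular automaton with memory set $S$.

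The main (and only) nontrivial step is the compactness argument; all of the group-theoretic content of the theorem, i.e. identifying the local rule from $\phi$-equivariance, has been pre-packaged in Lemma \ref{le-constant}, which reduces the $\phi$-equivariant situation to a single evaluation at $e_G$ in exactly the same way the classical Curtis--Hedlund argument does. I expect no real obstacle beyond making sure the neighborhood $V(x,S_x)$ is selected using the finiteness of $A$ so that local constancy follows from mere continuity into a discrete set.
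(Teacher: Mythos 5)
Your proposal is correct and follows essentially the same route as the paper: forward direction by Lemmas \ref{le-equivariance} and \ref{le-continuity}, converse by producing a finite $S$ via compactness of $A^G$ and then invoking Lemma \ref{le-constant}. The only difference is that you write out the compactness/finite-subcover argument explicitly, whereas the paper simply cites the corresponding step of the classical Curtis--Hedlund proof in \cite{CSC10}; your verification that $\pi_{e_G}\circ\tau$ is constant on $V(x,S)$ with $S=\bigcup_i S_{x_i}$ is accurate.
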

\begin{proof}
If $\tau$ is a $\phi$-cellular automaton, the result follows by Lemmas \ref{le-equivariance} and \ref{le-continuity}.

Suppose that $\tau : A^G \to A^H$ is continuous and $\phi$-equivariant. As in the proof of \cite[Theorem 1.8.1]{CSC10}, the compactness of $A^G$ implies that there exists a finite set $S \subseteq G$ such that $\pi_{e_G} \circ \tau$ is constant on $V(x,S)$, for all $x \in A^G$. By Lemma \ref{le-constant}, $\tau$ is a $\phi$-cellular automaton with memory set $S$. 
\end{proof}

The following is a generalized version of \cite[Proposition 1.4.9]{CSC10}

\begin{theorem}\label{composition}
Let $G$, $H$ and $K$ be groups, and consider $\phi \in \Hom(H,G)$ and $\psi \in \Hom(K,H)$. Let $\tau : A^G \to A^H$ be a $\phi$-cellular automaton with memory set $T \subseteq G$ and let $\sigma : A^H \to A^K$ be a $\psi$-cellular automaton with memory set $S \subseteq H$. Then, $\sigma\circ\tau:A^G\to A^K$ is a $(\phi\circ\psi)$-cellular automaton with memory set $\phi(S)T := \{ \phi(s)t : s \in S, t \in T \}$.
\end{theorem}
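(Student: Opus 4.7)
The plan is a direct computation of $(\sigma \circ \tau)(x)(k)$ for arbitrary $x \in A^G$ and $k \in K$, structured so as to simultaneously exhibit $\sigma \circ \tau$ as a $(\phi \circ \psi)$-cellular automaton and confirm that $\phi(S) T$ (which is finite, since both $S$ and $T$ are) serves as its memory set. Let $\mu_\tau : A^T \to A$ and $\mu_\sigma : A^S \to A$ denote the local functions of $\tau$ and $\sigma$, respectively.

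First I would unfold the definition of $\sigma$ to write $(\sigma \circ \tau)(x)(k) = \mu_\sigma((\psi(k^{-1}) \cdot \tau(x))\vert_S)$, and then invoke the $\phi$-equivariance of $\tau$ (Lemma \ref{le-equivariance}) with $h = \psi(k^{-1})$ to rewrite this as $\mu_\sigma(\tau(y)\vert_S)$, where $y := (\phi\circ\psi)(k^{-1}) \cdot x$. Next, for each $s \in S$, I would unfold the definition of $\tau$ to obtain $\tau(y)(s) = \mu_\tau((\phi(s^{-1}) \cdot y)\vert_T)$ and observe, using the shift convention $g \cdot y(h) = y(g^{-1}h)$, that $(\phi(s^{-1}) \cdot y)(t) = y(\phi(s) t)$; hence $\tau(y)(s)$ depends on $y$ only through the restriction $y\vert_{\phi(s) T}$, so $\tau(y)\vert_S$ is determined by $y\vert_{\phi(S) T}$.

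This allows me to define a local function $\mu : A^{\phi(S) T} \to A$ by
\begin{equation*}
\mu(w) := \mu_\sigma\bigl( s \mapsto \mu_\tau( t \mapsto w(\phi(s) t) ) \bigr),
\end{equation*}
which is well-defined precisely because $\phi(s) t \in \phi(S) T$ for every $s \in S$, $t \in T$. Substituting $w = y\vert_{\phi(S) T}$ into the chain above then yields $(\sigma \circ \tau)(x)(k) = \mu\bigl(((\phi\circ\psi)(k^{-1}) \cdot x)\vert_{\phi(S) T}\bigr)$, which is the defining identity of a $(\phi \circ \psi)$-cellular automaton with memory set $\phi(S) T$ and local function $\mu$.

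The main potential obstacle is purely bookkeeping: one has to carefully track which group elements get inverted under the shift-action convention, as this is what makes $\phi(s)$ (rather than $\phi(s^{-1})$) appear in the description of the memory set. A cleaner but less constructive alternative would be to invoke Theorem \ref{Curtis-Hedlund}: $\sigma \circ \tau$ is continuous by Lemma \ref{le-continuity}, and two successive applications of the given equivariances yield $(\phi\circ\psi)$-equivariance, so some memory set exists; Lemma \ref{le-constant} could then pin down a specific memory set, but that route does not immediately single out $\phi(S) T$ without essentially repeating the calculation above.
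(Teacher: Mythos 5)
Your proof is correct, but it follows a genuinely different route from the paper's. You unfold both local functions directly and exhibit an explicit local function for the composition, namely $\mu(w) = \mu_\sigma\bigl( s \mapsto \mu_\tau( t \mapsto w(\phi(s) t) ) \bigr)$ on $A^{\phi(S)T}$; the key steps (using $\phi$-equivariance of $\tau$ to pull $\psi(k^{-1})$ through, and the identity $(\phi(s^{-1})\cdot y)(t) = y(\phi(s)t)$ that makes $\phi(s)$ rather than $\phi(s^{-1})$ appear in the memory set) are all handled correctly. The paper instead avoids writing down the local function: it first verifies $(\phi\circ\psi)$-equivariance of $\sigma\circ\tau$, then shows that $\pi_{e_G}\circ\sigma\circ\tau$ is constant on each $V(x,\phi(S)T)$ by decomposing $V(x,\phi(S)T)=\bigcap_{s\in S}V(x,\phi(s)T)$ and applying the constancy criterion of Lemma \ref{le-constant} twice (once to $\tau$, once to $\sigma$), and concludes by Lemma \ref{le-constant} again. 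Your version buys an explicit formula for the composite local function, which is useful computationally and closer in spirit to the classical proof of \cite[Prop.~1.4.9]{CSC10}; the paper's version buys reuse of its technical lemma and sidesteps the bookkeeping you flag as the main hazard. Your closing remark about the Curtis--Hedlund route is also accurate: it would give existence of \emph{some} memory set but not identify $\phi(S)T$ without redoing the calculation.
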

\begin{proof}
First note that for every $k \in K$ and $x \in A^G$,
\[ k \cdot ( \sigma\circ\tau (x)) = \sigma( \psi(k) \cdot \tau(x) ) = \sigma \circ \tau ( \phi(\psi(k)) \cdot x) .  \]
This shows that $\sigma \circ \tau$ is $(\phi \circ \psi)$-equivariant. We shall show that $\pi_{e_G} \circ \sigma \circ \tau$ is constant on $V(x, \phi(S)T)$, for all $x \in A^G$. Fix $x \in A^G$  and $y \in V(x, \phi(S)T)$. Observe that $\phi(S)T = \bigcup \limits_{s \in S} \phi(s)T$ and so 
\[ V(x, \phi(S)T) = V\left(x, \bigcup_{s \in S} \phi(s)T \right) = \bigcap_{s \in S} V(x,\phi(s)T).  \]
Hence $y \in V(x,\phi(s)T)$ for all $s \in S$, which implies that $\phi(s^{-1}) \cdot y \in V( \phi(s^{-1}) \cdot x, T)$. By Lemma \ref{le-constant} applied to $\tau$, we have
\[ \tau(\phi(s^{-1}) \cdot y)(e_G) = \tau(\phi(s^{-1}) \cdot x)(e_G).    \]
By the $\phi$-equivariance of $\tau$, 
\[ \tau(y)(s)  =s^{-1} \cdot \tau(y)(e_G)   =  \tau(\phi(s^{-1}) \cdot y)(e_G) = \tau(\phi(s^{-1}) \cdot x)(e_G) = s^{-1} \cdot \tau(x)(e_G) = \tau(x)(s) , \]
for every $s \in S$. This means that $\tau(y) \in V(\tau(x), S)$. By Lemma \ref{le-constant} applied to $\sigma$, we have $\sigma(\tau(y))(e_G) = \sigma(\tau(x))(e_G)$, as required. Therefore, $\sigma \circ \tau$ is a $(\phi \circ \psi)$-cellular automaton with memory set $\phi(S)T$. 
\end{proof}

\begin{lemma}\label{le-phi}
Let $\tau : A^G \to A^H$ be a $\phi$-equivariant function, for some $\phi \in \Hom(H,G)$.
\begin{enumerate}
 \item If $\tau$ is surjective, then $\phi$ is injective.
 \item If $\tau$ is injective, then $\phi$ is surjective.
 \end{enumerate}
\end{lemma}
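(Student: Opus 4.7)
The plan is to prove both parts by contradiction, in each case leveraging Proposition \ref{fix} together with the fact that $H$ acts faithfully on $A^H$ (which is where the hypothesis $|A|\geq 2$ enters).

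For (1), I would assume $\tau$ is surjective but $\phi$ is not injective, so $N := \ker(\phi)$ contains some $n \neq e_H$. Proposition \ref{fix}(1) gives $A^H = \text{Im}(\tau) \subseteq \Fix(N)$, i.e., every configuration in $A^H$ would be $n$-invariant. But since $n^{-1}\neq e_H$, one can explicitly build $x \in A^H$ with $x(e_H)=0$ and $x(n^{-1})=1$ (values elsewhere set arbitrarily) so that $(n\cdot x)(e_H)=x(n^{-1})=1 \neq 0 = x(e_H)$, contradicting $x \in \Fix(N)$.

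For (2), I would assume $\tau$ is injective but $\phi$ is not surjective, so $K := \text{Im}(\phi) \subsetneq G$ and $[G:K]\geq 2$. The key move is to restrict attention to the $K$-periodic configurations $\Fix(K) \subseteq A^G$ and show that $\tau$ sends them into the much smaller set $\Fix(H) \subseteq A^H$ of constants. Indeed, for $x \in \Fix(K)$ and any $h\in H$, we have $\phi(h)\in K$ and hence $\phi(h)\cdot x = x$; $\phi$-equivariance then forces $h\cdot\tau(x) = \tau(\phi(h)\cdot x) = \tau(x)$, so $\tau(x) \in \Fix(H)$. Since elements of $\Fix(K)$ are parameterised by their values on the $[G:K]$ right cosets of $K$, we get $|\Fix(K)| = |A|^{[G:K]} \geq |A|^2 > |A| = |\Fix(H)|$, so $\tau|_{\Fix(K)}$ cannot be injective --- a contradiction.

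The only non-routine step is recognising that $\Fix(\ker\phi)$ and $\Fix(\text{Im}(\phi))$ are the correct invariant subspaces to examine; once these are in view, the argument reduces to a cardinality comparison that uses only $|A|\geq 2$ and the failure of $\phi$ to be injective/surjective, with no continuity or cellular-automaton hypothesis required.
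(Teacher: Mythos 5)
Your proposal is correct. Part (1) is essentially the paper's argument: both invoke Proposition \ref{fix}(1) to place $\mathrm{Im}(\tau)$ inside $\Fix(N)$ and then exhibit an explicit configuration (an indicator-type function supported near $e_H$) that is not $N$-periodic, so $\Fix(N) \subsetneq A^H$.

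For part (2) you take a genuinely cleaner route than the paper, though built on the same key observation. Both proofs note that $\phi$-equivariance forces $\tau$ to send $\mathrm{Im}(\phi)$-periodic configurations to constant configurations of $A^H$, of which there are only $\vert A \vert$. The paper then proceeds in two cases: it first restricts to $\Fix(G)$ (the $\vert A\vert$ constants of $A^G$), applies the pigeonhole principle if $\tau(\Fix(G)) \subsetneq \Fix(H)$, and otherwise adjoins one extra non-constant $\phi(H)$-periodic configuration (the indicator of $\phi(H)$) whose image must collide with something from $\Fix(G)$. You instead compare cardinalities directly on all of $\Fix(K)$ with $K = \mathrm{Im}(\phi)$: since $K$-periodic configurations are exactly those constant on the $[G:K] \geq 2$ cosets of $K$, one has $\vert \Fix(K)\vert = \vert A\vert^{[G:K]} > \vert A\vert = \vert\Fix(H)\vert$, and a single application of the pigeonhole principle finishes the proof with no case split. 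This is a genuine streamlining; the only thing to keep in mind is that when $[G:K]$ is infinite the inequality $\vert A\vert^{[G:K]} > \vert A\vert$ is a statement about infinite cardinals, but it holds trivially since $A$ is finite with $\vert A\vert \geq 2$ (indeed $\vert\Fix(K)\vert \geq \vert A\vert^2$ already suffices). Both arguments use only $\phi$-equivariance and $\vert A\vert \geq 2$, so your closing remark that no continuity or cellular-automaton hypothesis is needed matches the paper's level of generality.
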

\begin{proof}
\begin{enumerate}
\item Suppose that $\phi$ is not injective, so $N:= \ker(\phi) \neq \{ e_H \}$. By Lemma \ref{fix}, $\text{Im}(\tau) \subseteq \Fix(N)$. However, $\Fix(N) \subsetneq A^H$, as configurations such as $x \in A^H$ defined by $x(h) = 1$ if $h=e_H$ and $x(h)=0$ if $h \neq e_H$ are not in $\Fix(N)$. This shows that $\tau$ is not surjective. 

\item Suppose that $\phi$ is not surjective. The set of $G$-periodic configurations $\Fix(G)$ of $A^G$ corresponds to the set of constant configurations in $A^G$, so there are precisely $\vert A \vert$ of them. If $x \in \Fix(G)$, then, for every $h \in H$, 
\[ \tau(x)(h) = h^{-1} \cdot \tau(x)(e_H) = \tau(\phi(h^{-1}) \cdot x)(e_H) = \tau(x)(e_H). \]
This shows that $\tau(x)$ is constant, so $\tau(x) \in \Fix(H) \subseteq A^H$. 

The above proves that $\tau(\Fix(G)) \subseteq \Fix(H)$. If $\tau(\Fix(G)) \subsetneq \Fix(H)$, then $\vert \Fix(G) \vert = \vert A \vert = \vert \Fix(H) \vert$ implies that $\tau$ is not injective by the Pigeonhole Principle. Otherwise, suppose that $\tau (\Fix(G)) = \Fix(H)$. As $\phi$ is not surjective, $\phi(H)$ is a proper subgroup of $G$, and as $|A|\geq 2$, there exists a non-constant $z \in A^G$ that is $\phi(H)$-periodic (e.g. the indicator function $z: G \to A$ defined by $z(g)=1$ if $ g \in \phi(H)$ and $z(g) = 0$ if $g \not \in \phi(H)$). Again we have that for all $h \in H$,
\[ \tau(z)(h) = h^{-1} \cdot \tau(z)(e_H) = \tau(\phi(h^{-1}) \cdot z)(e_H) = \tau(z)(e_H). \]
Therefore, $\tau(z)$ is $H$-periodic, so $\tau(z) \in \Fix(H)$. This proves that $\tau$ is not injective, as $\tau(z)$ also has a preimage in $\Fix(G)$. 
\end{enumerate} 
\end{proof}

The converse of Lemma \ref{le-phi} is clearly not true; for example, when $G=H$, there are $\id_G$-cellular automata that are neither surjective nor injective.

We say that a $\phi$-cellular automaton $\tau : A^G \to A^H$ is \emph{invertible} if there exists a group homomorphism $\psi : G \to H$ and a $\psi$-cellular automaton $\sigma : A^H \to A^G$ such that $\tau \circ \sigma = \id_{A^H}$ and $\sigma \circ \tau = \id_{A^G}$. As usual, when such $\sigma$ exists, we write $\tau^{-1} := \sigma$.  

\begin{theorem}\label{invertible}
A $\phi$-cellular automaton $\tau : A^G \to A^H$ is invertible if and only if it is bijective. 
\end{theorem}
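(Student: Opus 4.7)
The plan is to follow the classical Curtis-Hedlund-style argument, generalized with the help of Theorem~\ref{Curtis-Hedlund} and Lemma~\ref{le-phi}. The forward direction (invertible implies bijective) is immediate: if $\tau$ has a two-sided inverse in the set-theoretic sense, it is necessarily bijective.

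For the converse, I would assume $\tau$ is bijective and then produce the candidate homomorphism $\psi$ directly. By Lemma~\ref{le-phi}, bijectivity of $\tau$ forces $\phi \in \Hom(H,G)$ to be both injective and surjective, hence a group isomorphism. So the only reasonable candidate is $\psi := \phi^{-1} \in \Hom(G,H)$, and I would take $\sigma : A^H \to A^G$ to be the set-theoretic inverse of $\tau$. It then remains to verify that this $\sigma$ is a $\psi$-cellular automaton, and by Theorem~\ref{Curtis-Hedlund} this reduces to checking that $\sigma$ is continuous and $\psi$-equivariant.

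Continuity of $\sigma$ is the step I expect to be the main (though still routine) obstacle. The idea is that $A^G$ and $A^H$, being products of the finite discrete space $A$, are compact Hausdorff in the prodiscrete topology by Tychonoff's theorem. Since $\tau$ is already known to be continuous by Lemma~\ref{le-continuity}, the standard fact that a continuous bijection from a compact space to a Hausdorff space is a homeomorphism yields continuity of $\sigma = \tau^{-1}$. The slight subtlety is simply to remember that we are working in the prodiscrete, not discrete, topology, and to invoke compactness correctly.

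For $\psi$-equivariance, I would argue directly from the $\phi$-equivariance of $\tau$. Given $g \in G$ and $y \in A^H$, set $x := \sigma(y)$, so that $y = \tau(x)$. Applying the $\phi$-equivariance of $\tau$ at $h = \psi(g) = \phi^{-1}(g) \in H$ gives
\[
\psi(g) \cdot y = \psi(g) \cdot \tau(x) = \tau\bigl(\phi(\psi(g)) \cdot x\bigr) = \tau(g \cdot x).
\]
Applying $\sigma$ to both sides yields $\sigma(\psi(g) \cdot y) = g \cdot x = g \cdot \sigma(y)$, which is exactly $\psi$-equivariance in the sense of Definition~\ref{equiv}. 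Combining this with continuity and invoking Theorem~\ref{Curtis-Hedlund} shows that $\sigma$ is a $\psi$-cellular automaton, and by construction $\tau \circ \sigma = \id_{A^H}$ and $\sigma \circ \tau = \id_{A^G}$, so $\tau$ is invertible.
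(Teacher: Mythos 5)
Your proposal is correct and follows essentially the same route as the paper: use Lemma~\ref{le-phi} to conclude $\phi$ is an isomorphism, obtain continuity of $\tau^{-1}$ from the compact-to-Hausdorff argument, derive $\phi^{-1}$-equivariance of $\tau^{-1}$ from the $\phi$-equivariance of $\tau$, and conclude via Theorem~\ref{Curtis-Hedlund}. No gaps.
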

\begin{proof}
It is clear that if $\tau$ is invertible, then it must be bijective.

Suppose that $\tau$ is bijective and consider the inverse function $\tau^{-1} : A^H \to A^G$. As $\tau$ 
is continuous, $A^G$ is compact and $A^H$ is Hausdorff, it follows that $\tau^{-1}$ is continuous. Now, by Lemma \ref{le-phi}, $\phi : H \to G$ is an isomorphism. Let $y \in A^H$ and $g \in G$ be arbitrary. Then, there exists $x \in A^G$ and $h \in H$ such that $\tau(x) = y$ and $\phi(g) = h$. As $\tau$ is $\phi$-equivariant, we have
\[ h \cdot \tau(x) = \tau( \phi(h) \cdot x). \]
Applying $\tau^{-1}$ above and substituting $x=\tau^{-1}(y)$ and $h = \phi^{-1}(g)$, we obtain
\[ \tau^{-1} ( \phi^{-1}(g) \cdot y ) = g \cdot \tau^{-1}(y). \]
This proves that $\tau^{-1}$ is $\phi^{-1}$-equivariant. Therefore, $\tau^{-1}$ is a $\phi^{-1}$-cellular automaton by Theorem \ref{Curtis-Hedlund}. 
\end{proof}

\begin{corollary}
If $\tau :A^G \to A^H$ is an invertible $\phi$-cellular automaton, then $\tau^{-1} : A^H \to A^G$ is a $\phi^{-1}$-cellular automaton. 
\end{corollary}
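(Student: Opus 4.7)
My plan is to observe that this corollary is essentially a repackaging of what was already established in the second half of the proof of Theorem~\ref{invertible}. The definition of invertibility only promises the \emph{existence} of some $\psi$ and some $\psi$-cellular automaton $\sigma$ serving as a two-sided inverse; the corollary sharpens this by identifying $\psi$ with $\phi^{-1}$ and by noting that the set-theoretic inverse of $\tau$ itself is a $\phi^{-1}$-cellular automaton.

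First I would invoke Theorem~\ref{invertible}, which tells us that $\tau$ being invertible is equivalent to $\tau$ being bijective. In particular, the set-theoretic inverse $\tau^{-1} : A^H \to A^G$ exists and is unique as a function, so any $\sigma$ witnessing the invertibility of $\tau$ must coincide with $\tau^{-1}$. Next, by Lemma~\ref{le-phi}, the bijectivity of $\tau$ forces $\phi : H \to G$ to be both injective and surjective, hence a group isomorphism, so $\phi^{-1} : G \to H$ is a well-defined group homomorphism.

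It then remains to check that $\tau^{-1}$ is a $\phi^{-1}$-cellular automaton, and for this I would simply appeal to the Generalized Curtis-Hedlund Theorem~\ref{Curtis-Hedlund}, verifying its two hypotheses exactly as in the proof of Theorem~\ref{invertible}: continuity of $\tau^{-1}$ follows from the standard fact that a continuous bijection from a compact space to a Hausdorff space is a homeomorphism (with $A^G$ compact by Tychonoff and $A^H$ Hausdorff), and $\phi^{-1}$-equivariance follows from the $\phi$-equivariance of $\tau$ by writing $h = \phi^{-1}(g)$ and $x = \tau^{-1}(y)$ in the identity $h \cdot \tau(x) = \tau(\phi(h) \cdot x)$ and then applying $\tau^{-1}$ to both sides, obtaining $g \cdot \tau^{-1}(y) = \tau^{-1}(\phi^{-1}(g) \cdot y)$ for all $g \in G$, $y \in A^H$.

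There is no substantive obstacle here; the only point worth stating explicitly is the uniqueness of set-theoretic inverses, which is what allows us to upgrade the abstract $\psi$ from the definition of invertibility into the concrete homomorphism $\phi^{-1}$.
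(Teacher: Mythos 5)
Your proposal is correct and follows essentially the same route as the paper: the corollary is read off from the second half of the proof of Theorem~\ref{Curtis-Hedlund}-based argument for Theorem~\ref{invertible}, where continuity of $\tau^{-1}$ (compact domain, Hausdorff codomain), the isomorphy of $\phi$ via Lemma~\ref{le-phi}, and the $\phi^{-1}$-equivariance computation are exactly the steps the authors carry out. Your explicit remark about the uniqueness of the set-theoretic inverse, which identifies the abstract $\psi$ and $\sigma$ from the definition of invertibility with $\phi^{-1}$ and $\tau^{-1}$, is a harmless and correct clarification of what the paper leaves implicit.
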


\section{The group of generalized invertible cellular automata}

We begin this section by investigating further the generalized cellular automata defined in Example \ref{star}.

\begin{lemma}\label{L-1}
Let $G$, $H$ and $K$ be groups, and consider $\phi \in \Hom(H,G)$ and $\psi \in \Hom(K,H)$. Then, 
\[ (\psi \circ \phi)^\star = \phi^\star \circ \psi^\star\]
\end{lemma}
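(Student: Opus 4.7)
The plan is a direct pointwise verification using the definition of the $\star$ operation from Example \ref{star}, together with the associativity of function composition. Since both sides of the claimed equality are functions from $A^G$ to $A^K$, it suffices to evaluate both sides at an arbitrary configuration $x \in A^G$ and show that the resulting elements of $A^K$ agree.

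Concretely, I would first unfold the right-hand side: by definition, $\phi^\star(x) = x \circ \phi \in A^H$, and applying the second operator gives $\psi^\star(x \circ \phi) = (x \circ \phi) \circ \psi$, viewed as an element of $A^K$. For the left-hand side, the definition yields $(\phi \circ \psi)^\star(x) = x \circ (\phi \circ \psi)$. Associativity of composition of set-functions then closes the argument, since $(x \circ \phi) \circ \psi = x \circ (\phi \circ \psi)$.

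There is no real obstacle here; the single subtle point worth flagging is that the $\star$ construction is \emph{contravariant} in the group-homomorphism argument, which is why the order of composition is reversed when passing from $\Hom$ to the induced maps on configuration spaces. I would therefore present the calculation carefully, tracking the domains and codomains at each step ($\phi:H\to G$, $\psi:K\to H$, hence $\phi^\star:A^G\to A^H$ and $\psi^\star:A^H\to A^K$), so that the reader sees why the indicated composition of the starred maps is well-defined and why it produces the star of the composition in the opposite order. The entire proof should fit in two or three lines of display math followed by the remark that, since $x \in A^G$ was arbitrary, the two functions coincide.
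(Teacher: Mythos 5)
Your proof is correct and takes essentially the same route as the paper's: both unfold the definition $\alpha^\star(x)=x\circ\alpha$ pointwise and appeal to associativity of function composition. One small remark: with the stated memberships $\phi\in\Hom(H,G)$ and $\psi\in\Hom(K,H)$, the only composition that typechecks is $(\phi\circ\psi)^\star=\psi^\star\circ\phi^\star$ (which is exactly what your domain-tracking produces), so your write-up silently fixes an orientation slip that is present in the lemma as printed (the paper's own proof implicitly swaps the roles of $\phi$ and $\psi$).
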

\begin{proof}
Observe that for any $x \in A^K$,
\[ (\psi \circ \phi)^\star (x) = x \circ ( \psi \circ \phi) = (x \circ \psi) \circ \phi = \phi^\star \circ \psi^\star (x).  \]
\end{proof}

We shall focus on studying $\phi$-cellular automata when $G=H$. Following the notation in \cite{CSC10}, let $\CA(G;A)$ be the set of all $\id_G$-cellular automata $\tau : A^G \to A^G$. This is a monoid equipped with the composition of functions. Let $\ICA(G;A)$ be the group of units of $\CA(G;A)$, i.e. the group consisting of all invertible $\id_G$-cellular automata over $A^G$. Define
\begin{align*}
\GCA(G;A) & := \{ \tau : A^G \to A^G  \; \vert \;  \tau \text{ is a $\phi$-cellular automaton for some } \phi \in \End(G) \},  \\
\IGCA(G;A) & := \{ \tau : A^G \to A^G \; \vert \; \tau \text{ is a bijective $\phi$-cellular automaton for some } \phi \in \Aut(G)  \}.  
\end{align*}
By Theorem \ref{composition}, $\GCA(G;A)$ is a monoid equipped with the composition of functions, and by Theorem \ref{invertible}, $\IGCA(G;A)$ is its group of units. 

Define $\End(G)^{\op}$ as the monoid with set $\End(G)$ and binary operation $\odot$ given by $\phi \odot \psi = \psi \circ \phi$, for every $\phi, \psi \in \End(G)$.  

\begin{lemma}
With the notation introduced above,
\begin{enumerate}
\item $\CA(G;A)$ is a submonoid of $\GCA(G;A)$
\item $\End(G)^{\op}$ is isomorphic to a submonoid of $\GCA(G;A)$. 
\end{enumerate}
\end{lemma}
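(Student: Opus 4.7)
The plan is to handle the two parts separately, using results already established in the paper.

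For part (1), I would first note that the identity map $\id_{A^G}$ is the $\id_G$-cellular automaton with memory set $S = \{e_G\}$ and local function $\mu = \id_A$ (viewing $A^S$ as $A$), so $\id_{A^G} \in \CA(G;A)$ and hence $\CA(G;A)$ contains the identity of $\GCA(G;A)$. For closure under composition, I would invoke Theorem \ref{composition}: if $\sigma, \tau \in \CA(G;A)$, taking $\phi = \psi = \id_G$ shows that $\sigma \circ \tau$ is a $(\id_G \circ \id_G) = \id_G$-cellular automaton, so $\sigma \circ \tau \in \CA(G;A)$. Clearly every element of $\CA(G;A)$ lies in $\GCA(G;A)$ by definition, completing this part.

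For part (2), I would exhibit the isomorphism explicitly using the construction from Example \ref{star}. Define $\Phi : \End(G)^{\op} \to \GCA(G;A)$ by $\Phi(\phi) = \phi^\star$. By Example \ref{star}, each $\phi^\star$ is a $\phi$-cellular automaton, so $\Phi$ does land in $\GCA(G;A)$. The map sends identities to identities since $\id_G^\star(x) = x \circ \id_G = x$, so $\Phi(\id_G) = \id_{A^G}$. To verify that $\Phi$ is a monoid homomorphism, I would compute, for $\phi, \psi \in \End(G)$,
\[ \Phi(\phi \odot \psi) = \Phi(\psi \circ \phi) = (\psi \circ \phi)^\star = \phi^\star \circ \psi^\star = \Phi(\phi) \circ \Phi(\psi), \]
where the third equality is exactly Lemma \ref{L-1}. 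Thus $\Phi$ respects the multiplication (note how the opposite operation $\odot$ correctly compensates for the order reversal in Lemma \ref{L-1}).

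The remaining step is injectivity of $\Phi$, which is where I expect the only mild subtlety. Suppose $\phi^\star = \psi^\star$, i.e. $x \circ \phi = x \circ \psi$ for every $x \in A^G$. If there existed $g \in G$ with $\phi(g) \neq \psi(g)$, then since $\vert A \vert \geq 2$ and $\{0,1\} \subseteq A$, I would choose $x$ to be the indicator function of $\{\phi(g)\}$, obtaining $1 = x(\phi(g)) \neq x(\psi(g)) = 0$, a contradiction. Hence $\phi = \psi$, so $\Phi$ is injective. The image of $\Phi$ is therefore a submonoid of $\GCA(G;A)$ isomorphic to $\End(G)^{\op}$, as desired.
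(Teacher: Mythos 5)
Your proposal is correct and follows essentially the same route as the paper: part (1) via Theorem \ref{composition}, and part (2) via the map $\phi \mapsto \phi^\star$, with the homomorphism property from Lemma \ref{L-1} and injectivity established by evaluating indicator functions. The only difference is that you spell out the identity and closure checks in part (1) slightly more explicitly than the paper does.
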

\begin{proof}
The first part is clear by Theorem \ref{composition}. Now define a function $\Phi : \End(G)^{\op} \to \GCA(G;A)$ by $\Phi(\phi) = \phi^{\star}$ for every $\phi \in \End^{\op}(G)$. By the previous lemma, for any $\phi, \psi \in \End^{\op}(G)$, we have
\[ \Phi( \phi \odot \psi) = \Phi( \psi \circ \phi) = ( \psi \circ \phi)^\star = \phi^\star \circ \psi^\star = \Phi(\phi) \circ \Phi(\psi).  \]
This shows that $\Phi$ is a homomorphism of monoids. Finally, assume that $\Phi(\phi) = \Phi(\psi)$. Then $\phi^\star = \psi^\star$, which implies that for every $x \in A^G$, 
\[ x \circ \phi  = \phi^\star(x) = \psi^\star(x) = x \circ \psi.  \]
Thus, for every $g \in G$, and $x \in A^G$,
\[ x( \phi(g)) = x( \psi(g) ).  \]
For each $g \in G$, let $\chi_g : G \to A$ be the indicator function of $g$, i.e. $\chi_g(h) =1$ if $h=g$ and $\chi_g(h) = 0$ if $h \neq g$. Therefore, for any $g \in G$,
\[ 1 = \chi_{\phi(g)} (\phi(g)) = \chi_{\phi(g)}(\psi(g)).  \] 
This implies that $\phi(g) = \psi(g)$ for all $g \in G$, so $\phi = \psi$. 

\end{proof}

Abusing notation, we also denote by $\End(G)^{\op}$ the isomorphic copy inside $\GCA(G;A)$ of $\End(G)^{\op}$.

\begin{lemma}\label{normal}
$\ICA(G;A)$ is a normal subgroup of $\IGCA(G;A)$. 
\end{lemma}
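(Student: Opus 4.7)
The plan is to check the subgroup axioms (trivially, since $\ICA(G;A)$ is already the group of units of $\CA(G;A)$ and $\CA(G;A) \subseteq \GCA(G;A)$), and then to verify normality by a direct computation with the Theorem of Composition. Concretely, I would first observe that $\ICA(G;A) \subseteq \IGCA(G;A)$ since the identity $\id_G$ belongs to $\Aut(G)$, so every element of $\ICA(G;A)$ is in particular a bijective $\id_G$-cellular automaton.

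For normality, I would pick arbitrary $\tau \in \IGCA(G;A)$ and $\sigma \in \ICA(G;A)$, and show that $\tau \circ \sigma \circ \tau^{-1} \in \ICA(G;A)$. Let $\phi \in \Aut(G)$ be such that $\tau$ is a $\phi$-cellular automaton. By the corollary immediately following Theorem \ref{invertible}, $\tau^{-1}$ is a $\phi^{-1}$-cellular automaton. Since $\sigma$ is an $\id_G$-cellular automaton, applying Theorem \ref{composition} to the pair $(\tau^{-1}, \sigma)$ yields that $\sigma \circ \tau^{-1}$ is a $(\phi^{-1} \circ \id_G)$-cellular automaton, i.e.\ a $\phi^{-1}$-cellular automaton. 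Applying Theorem \ref{composition} once more, this time to the pair $(\sigma \circ \tau^{-1}, \tau)$, gives that $\tau \circ \sigma \circ \tau^{-1}$ is a $(\phi^{-1} \circ \phi)$-cellular automaton, which is exactly an $\id_G$-cellular automaton. The composition is clearly a bijection, so Theorem \ref{invertible} then guarantees it is invertible, placing it in $\ICA(G;A)$.

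I expect no real obstacle here: the proof is essentially bookkeeping with the group homomorphisms attached to each factor, and the key conceptual point is simply that conjugation by a $\phi$-cellular automaton produces the composition $\phi^{-1} \circ \id_G \circ \phi = \id_G$, so the $\id_G$-class is preserved. The only subtlety worth spelling out is the order convention used in Theorem \ref{composition}, since the homomorphism attached to $\sigma \circ \tau$ is $\phi \circ \psi$ (not $\psi \circ \phi$); I would note this explicitly when invoking the theorem so the reader can verify that the composition of homomorphisms really does collapse to $\id_G$ in the required order.
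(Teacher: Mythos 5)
Your proof is correct and follows essentially the same route as the paper: conjugate $\sigma$ by $\tau$, use the Theorem of Composition together with the fact that $\tau^{-1}$ is a $\phi^{-1}$-cellular automaton, and observe that the attached homomorphisms collapse to $\id_G$. The paper does this in a single application of Theorem \ref{composition} to $\tau^{-1}\circ\sigma\circ\tau$ rather than two, but the content is identical, and your explicit attention to the order convention in Theorem \ref{composition} is a reasonable clarification rather than a deviation.
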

\begin{proof}
Let $\sigma \in \ICA(G;A)$ and $\tau \in \IGCA(G;A)$. Then $\tau$ is a bijective $\phi$-cellular automaton for some $\phi \in \Aut(G)$ and $\tau^{-1}$ is a bijective $\phi^{-1}$-cellular automaton. By Theorem \ref{composition}, $\tau^{-1} \circ \sigma \circ \tau$ is a bijective $\phi^{-1} \circ \id_G \circ \phi = \id_G$-cellular automaton, so $\tau^{-1} \circ \sigma \circ \tau \in \ICA(G;A)$. 
\end{proof}

For subsets $A$ and $B$ of $\GCA(G;A)$, define $A \circ B := \{ \tau \circ \sigma : \tau \in A, \sigma \in B \}$. 

\begin{lemma}\label{product}
\text{\\}
\begin{enumerate}
\item $\GCA(G;A) =  \End(G)^{\op} \circ \CA(G;A)$.
\item $\IGCA(G;A) = \Aut(G)^{\op} \circ \ICA(G;A)$.
\end{enumerate}
\end{lemma}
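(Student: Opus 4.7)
The plan is to prove both inclusions for each part using a single natural decomposition: any $\phi$-cellular automaton $\tau$ with memory set $S$ and local function $\mu$ can be written as $\tau = \phi^\star \circ \sigma$, where $\sigma$ is the ordinary ($\id_G$-)cellular automaton sharing the same memory set $S$ and local function $\mu$.

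For part (1), the inclusion $\End(G)^{\op} \circ \CA(G;A) \subseteq \GCA(G;A)$ follows immediately from Theorem \ref{composition}: since $\phi^\star$ is a $\phi$-cellular automaton (by Example \ref{star}) and any $\sigma \in \CA(G;A)$ is an $\id_G$-cellular automaton, the composition $\phi^\star \circ \sigma$ is $(\id_G \circ \phi) = \phi$-cellular, hence lies in $\GCA(G;A)$. For the reverse inclusion, I would take $\tau \in \GCA(G;A)$ witnessed as a $\phi$-cellular automaton with memory set $S$ and local function $\mu$, and let $\sigma \in \CA(G;A)$ be the classical cellular automaton defined by the same data $S$ and $\mu$. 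A direct computation then confirms $\tau = \phi^\star \circ \sigma$: for every $x \in A^G$ and $g \in G$,
\[ \phi^\star(\sigma(x))(g) = \sigma(x)(\phi(g)) = \mu\bigl(\bigl(\phi(g)^{-1} \cdot x\bigr) \vert_S\bigr) = \mu\bigl(\bigl(\phi(g^{-1}) \cdot x\bigr) \vert_S\bigr) = \tau(x)(g), \]
where the identity $\phi(g)^{-1} = \phi(g^{-1})$ uses only that $\phi$ is a group homomorphism.

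For part (2), the strategy is to restrict the decomposition above to the invertible setting. The inclusion $\Aut(G)^{\op} \circ \ICA(G;A) \subseteq \IGCA(G;A)$ follows because, by Lemma \ref{L-1}, $\phi \in \Aut(G)$ implies $\phi^\star$ has two-sided inverse $(\phi^{-1})^\star$, so $\phi^\star$ is bijective; composing bijections then yields a bijective $\phi$-cellular automaton, which is an element of $\IGCA(G;A)$. Conversely, given $\tau \in \IGCA(G;A)$, I would pick $\phi \in \Aut(G)$ making $\tau$ a bijective $\phi$-cellular automaton and decompose $\tau = \phi^\star \circ \sigma$ via part (1). Since both $\tau$ and $\phi^\star$ are bijective, so is $\sigma = (\phi^{-1})^\star \circ \tau$; as $\sigma$ is a bijective $\id_G$-cellular automaton, Theorem \ref{invertible} places $\sigma$ in $\ICA(G;A)$.

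There is no substantial obstacle here: the only real computation is the decomposition identity, which collapses to $\phi(g)^{-1} = \phi(g^{-1})$, and everything else follows by invoking Theorems \ref{composition} and \ref{invertible} together with Lemma \ref{L-1}.
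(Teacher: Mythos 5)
Your proposal is correct and follows essentially the same route as the paper: the same decomposition $\tau = \phi^\star \circ \hat{\tau}$ with $\hat{\tau}$ the classical CA built from the same memory set and local function, verified by the same one-line computation reducing to $\phi(g)^{-1} = \phi(g^{-1})$. The only cosmetic differences are that you spell out the easy inclusions via Theorem \ref{composition} (which the paper leaves implicit) and in part (2) you take $\phi \in \Aut(G)$ directly from the definition of $\IGCA(G;A)$ where the paper instead invokes Lemma \ref{le-phi}; both are valid.
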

\begin{proof}
\begin{enumerate}
\item Let $\tau \in \GCA(G;A)$. By definition, there exist $\phi \in \End(G)$, a finite subset $S \subseteq G$, and $\mu :A^S \to A$, such that $\tau(x)(g) = \mu( (\phi(g^{-1}) \cdot x) \vert_S)$, for all $g \in G$, $x \in A^G$. Define $\hat{\tau} \in \CA(G;A)$ by 
\[ \hat{\tau}(x)(g) := \mu(( g^{-1} \cdot x)\vert_S), \quad \forall g \in G, x \in A^G.  \]
Observe that, for all $x \in A^G$,
\[ (\phi^\star \circ \hat{\tau})(x) =\phi^\star( \hat{\tau}(x)) = \hat{\tau}(x) \circ \phi.    \]
Thus, for all $g \in G$, 
\[ (\phi^\star \circ \hat{\tau})(x)(g) = \hat{\tau}(x) ( \phi(g)) = \mu(( \phi(g^{-1}) \cdot x)\vert_S) =\tau(x)(g).  \]

\item Let $\tau \in \IGCA(G;A)$. By the previous point, there exist $\phi \in \End(G)$ and $\hat{\tau} \in \CA(G;A)$ such that $\tau = \phi^\star \circ \hat{\tau}$. By Lemma \ref{le-phi}, $\phi$ must be bijective, so $\phi \in \Aut(G)^{\op}$.  Furthermore, $\hat{\tau} = (\phi^{\star})^{-1} \circ \tau$, so $\hat{\tau}$ must be bijective. By Theorem \ref{invertible}, $\hat{\tau} \in \ICA(G;A)$.  
\end{enumerate}
\end{proof}

As noted in the previous section, a function $\tau : A^G \to A^H$ may be a $\phi$-cellular automaton and a $\psi$-cellular automaton for different homomorphisms $\phi, \psi \in \Hom(H,G)$. However, this may not happen whenever $\tau$ is injective, as shown in the following result. 

\begin{lemma}
Let $\tau : A^G \to A^H$ be an injective function that is $\phi$-equivariant and $\psi$-equivariant for some $\phi, \psi \in \Hom(H,G)$. Then, $\phi = \psi$. 
\end{lemma}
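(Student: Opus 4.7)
The plan is to exploit the two equivariance hypotheses in tandem with injectivity, and then rely on the fact that the shift action of $G$ on $A^G$ is faithful (which uses $|A| \geq 2$). The argument should fit in a handful of lines.

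First, I would fix an arbitrary $h \in H$ and an arbitrary $x \in A^G$, and write down both equivariance identities:
\[ h \cdot \tau(x) = \tau(\phi(h) \cdot x) \quad \text{and} \quad h \cdot \tau(x) = \tau(\psi(h) \cdot x). \]
Equating these gives $\tau(\phi(h) \cdot x) = \tau(\psi(h) \cdot x)$, and then the injectivity of $\tau$ cancels to yield $\phi(h) \cdot x = \psi(h) \cdot x$ for every $x \in A^G$.

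At this point it remains to promote this pointwise equality (for every configuration) to an equality of group elements $\phi(h) = \psi(h)$. I would set $k := \psi(h)\phi(h)^{-1}$; the previous display, after acting by $\phi(h)^{-1}$, says that $k \cdot x = x$ for every $x \in A^G$. To show $k = e_G$, I would test on the indicator configuration $\chi_{e_G} \in A^G$ defined by $\chi_{e_G}(e_G) = 1$ and $\chi_{e_G}(g) = 0$ for $g \neq e_G$ (available since $\{0,1\} \subseteq A$). Evaluating $k \cdot \chi_{e_G} = \chi_{e_G}$ at the point $k$ gives $\chi_{e_G}(e_G) = \chi_{e_G}(k)$, i.e.\ $\chi_{e_G}(k) = 1$, forcing $k = e_G$ and therefore $\psi(h) = \phi(h)$. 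Since $h \in H$ was arbitrary, $\phi = \psi$.

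There is no real obstacle here; the only conceptual point is the faithfulness step, where one has to exhibit a configuration distinguishing any non-identity element of $G$, and the indicator function is the standard choice. Everything else is a direct manipulation of the two equivariance identities together with injectivity.
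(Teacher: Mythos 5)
Your proposal is correct and follows essentially the same route as the paper: equate the two equivariance identities, cancel by injectivity to get $\phi(h)\cdot x = \psi(h)\cdot x$ for all $x$, and conclude by faithfulness of the shift action. The only difference is that you prove the faithfulness step explicitly with the indicator configuration, whereas the paper simply cites it (referencing \cite[2.7.2]{CSC10}); your verification of that step is accurate.
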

\begin{proof}
By Definition \ref{equiv}, for all $h \in H$, $x \in A^G$,
\[ \tau( \phi(h) \cdot x) = h \cdot \tau(x) = \tau( \psi(h) \cdot x).  \]
As $\tau$ is injective, then $\phi(h) \cdot x = \psi(h) \cdot x$ for all $h \in H$, $x \in A^G$. As $\vert A \vert \geq 2$, the shift action is faithful (see \cite[2.7.2]{CSC10}), so we have $\phi(h) = \psi(h)$ for all $h \in H$. The result follows. 
\end{proof}

Recall that a group $K$ is the \emph{semidirect product} of two subgroups $A$ and $B$ if $K = AB$, $A$ is normal in $K$ and $A \cap B = \{ e_K \}$  (see \cite[p. 167]{Rotman}). In such case, we write $K \cong A  \rtimes B$. 

\begin{theorem}
$\IGCA(G;A) \cong \ICA(G;A) \rtimes \Aut(G)^{op}$.
\end{theorem}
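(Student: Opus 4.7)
The plan is to verify the three defining conditions of the semidirect product decomposition $\IGCA(G;A) = \ICA(G;A) \rtimes \Aut(G)^{\op}$, where $\Aut(G)^{\op}$ sits inside $\IGCA(G;A)$ as the image of the restriction to $\Aut(G)^{\op}$ of the embedding $\Phi : \End(G)^{\op} \hookrightarrow \GCA(G;A)$, $\phi \mapsto \phi^\star$, established right after Lemma \ref{L-1}. I will first check that this restriction lands inside $\IGCA(G;A)$: for $\phi \in \Aut(G)$, Lemma \ref{L-1} together with $\phi \circ \phi^{-1} = \phi^{-1} \circ \phi = \id_G$ gives $\phi^\star \circ (\phi^{-1})^\star = (\phi^{-1})^\star \circ \phi^\star = \id_G^\star = \id_{A^G}$, so $\phi^\star$ is bijective and hence lies in $\IGCA(G;A)$ by Theorem \ref{invertible}.

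Next, I invoke the facts already proved in this section: Lemma \ref{normal} states that $\ICA(G;A)$ is a normal subgroup of $\IGCA(G;A)$, and Lemma \ref{product}(2) states that $\IGCA(G;A) = \Aut(G)^{\op} \circ \ICA(G;A)$. Combining these two gives the product decomposition required for a semidirect product.

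The remaining and most delicate condition is triviality of the intersection: I must show that $\ICA(G;A) \cap \Aut(G)^{\op} = \{ \id_{A^G} \}$. Suppose $\tau$ is in this intersection. Then on one hand $\tau \in \ICA(G;A)$ makes $\tau$ an invertible $\id_G$-cellular automaton, so by Lemma \ref{le-equivariance} it is $\id_G$-equivariant. On the other hand $\tau = \phi^\star$ for some $\phi \in \Aut(G)$, so $\tau$ is $\phi$-equivariant. Since $\tau$ is invertible it is injective, and the lemma immediately preceding this theorem forces $\phi = \id_G$; hence $\tau = \id_G^\star = \id_{A^G}$. This is the step I expect to be the genuine obstacle, and it is what makes the decomposition a semidirect rather than just an internal product.

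Assembling these three ingredients using the characterization of the semidirect product from \cite[p.~167]{Rotman} gives the isomorphism $\IGCA(G;A) \cong \ICA(G;A) \rtimes \Aut(G)^{\op}$. The implicit action of $\Aut(G)^{\op}$ on $\ICA(G;A)$ is conjugation inside $\IGCA(G;A)$, which by the proof of Lemma \ref{normal} sends $(\phi, \sigma) \mapsto (\phi^\star)^{-1} \circ \sigma \circ \phi^\star = (\phi^{-1})^\star \circ \sigma \circ \phi^\star$, an $\id_G$-cellular automaton whose structure could be recorded for later use in Section 4.
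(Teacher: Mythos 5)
Your proposal is correct and follows essentially the same route as the paper's proof: normality from Lemma \ref{normal}, the product decomposition from Lemma \ref{product}, and triviality of the intersection via the uniqueness-of-$\phi$ lemma applied to an injective map that is both $\phi$-equivariant and $\id_G$-equivariant. The only additions are the explicit check that $\phi^\star \in \IGCA(G;A)$ and the remark identifying the conjugation action, both of which are harmless refinements of the paper's argument.
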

\begin{proof}
It follows by Lemma \ref{normal} that $ \ICA(G;A) $ is a normal subgroup of $\IGCA(G;A)$. Now Lemma \ref{product}, and the fact that $ \ICA(G;A) $ is normal, implies that
\[ \IGCA(G;A) = \Aut(G)^{op} \circ \ICA(G;A)  = \ICA(G;A) \circ \Aut(G)^{op}.  \]
Finally, let $\tau \in \Aut(G)^{op} \cap \ICA(G;A)$. Hence, there exist $\phi \in  \Aut(G)$ and $\hat{\tau} \in \ICA(G;A)$ such that $\tau = \phi^{\star} = \hat{\tau}$. This implies that $\tau$ is a $\phi$-equivariant and $\id_{G}$-equivariant. By the previous lemma, we must have that $\phi = \id_G$, so $\tau = (\id_G)^\star = \id_{A^G}$. The result follows. 
\end{proof}


\section{An application to the automorphisms of $\CA(G;A)$}

The key idea of this section is that every automorphism $\phi$ of $G$ induces an automorphism of $\CA(G;A)$ via conjugation by $\phi^\star$. Explicitly, for each $\phi \in \Aut(G)$, we define a map $\phi_{\CA} : \CA(G;A) \to \CA(G;A)$ by
\[  \phi_{\CA}(\tau)  : = (\phi^{-1})^\star  \circ \tau \circ \phi^\star, \quad \forall \tau \in \CA(G;A). \]

It follows by the same argument as in the proof of Lemma \ref{normal} that $\phi_{\CA}(\tau) \in \CA(G;A)$.

\begin{proposition}
Let $\phi, \psi \in\Aut(G)$. 
\begin{enumerate}
\item $(\phi \circ \psi)_{\CA} = \phi_{\CA} \circ \psi_{\CA}$. 
\item $\phi_{\CA} \in \Aut(\CA(G;A))$.  
\end{enumerate}
\end{proposition}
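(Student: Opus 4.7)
The plan is to verify both parts by straightforward computation, leveraging Lemma \ref{L-1} (which gives the contravariance $(\psi \circ \phi)^\star = \phi^\star \circ \psi^\star$) and the evident identity $(\id_G)^\star = \id_{A^G}$. The previous paragraph already guarantees $\phi_{\CA}(\tau) \in \CA(G;A)$, so $\phi_{\CA}$ is at least a well-defined self-map of $\CA(G;A)$, and I only need to establish the algebraic identities.

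For part (1), I would expand
\[ (\phi\circ\psi)_{\CA}(\tau) = ((\phi\circ\psi)^{-1})^\star \circ \tau \circ (\phi\circ\psi)^\star, \]
and apply Lemma \ref{L-1} to both $\star$-factors, using $(\phi\circ\psi)^{-1} = \psi^{-1}\circ\phi^{-1}$. This rewrites the right-hand side as $(\phi^{-1})^\star \circ (\psi^{-1})^\star \circ \tau \circ \psi^\star \circ \phi^\star$, which is precisely $\phi_{\CA}(\psi_{\CA}(\tau))$. The only delicate point is keeping track of the reversal of order, which is exactly why conjugation by $\phi^\star$ produces a left action of $\Aut(G)$ on $\CA(G;A)$ rather than a right action.

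For part (2), I would prove the homomorphism property by inserting $\phi^\star\circ(\phi^{-1})^\star$ between $\sigma$ and $\tau$ in $\phi_{\CA}(\sigma\circ\tau)$; this insertion is legal because Lemma \ref{L-1} yields $\phi^\star\circ(\phi^{-1})^\star = (\phi^{-1}\circ\phi)^\star = (\id_G)^\star = \id_{A^G}$. Regrouping then gives $\phi_{\CA}(\sigma)\circ\phi_{\CA}(\tau)$. The identity is preserved by the same cancellation: $\phi_{\CA}(\id_{A^G}) = (\phi^{-1})^\star\circ\phi^\star = \id_{A^G}$. Finally, bijectivity drops out of part (1) with no further work: applying (1) to $(\phi,\phi^{-1})$ and to $(\phi^{-1},\phi)$ gives
\[ \phi_{\CA}\circ(\phi^{-1})_{\CA} = (\id_G)_{\CA} = \id_{\CA(G;A)} = (\phi^{-1})_{\CA}\circ\phi_{\CA}, \]
so $(\phi^{-1})_{\CA}$ is a two-sided inverse for $\phi_{\CA}$ in $\Aut(\CA(G;A))$.

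I do not expect a genuine obstacle here; the argument is essentially an unwinding of definitions. The only thing to be careful about is the contravariance furnished by Lemma \ref{L-1}, which forces the order of composition to flip whenever a $\star$ is distributed across a product, and this is the conceptual reason that $\phi \mapsto \phi_{\CA}$ is a group homomorphism in the natural (covariant) direction.
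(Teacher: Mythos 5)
Your proof is correct and follows essentially the same route as the paper: part (1) is the identical computation via Lemma \ref{L-1} and $(\phi\circ\psi)^{-1}=\psi^{-1}\circ\phi^{-1}$, and part (2) is the conjugation-is-a-homomorphism argument with the inverse supplied by part (1), which the paper leaves as ``easy to check'' and you have simply spelled out. No gaps.
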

\begin{proof}
\begin{enumerate}
\item  Lemma \ref{L-1} implies that for any $\tau \in \CA(G;A)$,
\begin{align*}
(\phi \circ \psi)_{\CA}(\tau) & =  ((\phi \circ \psi)^{-1})^\star \circ \tau \circ (\phi \circ \psi)^\star \\
& = (\psi^{-1} \circ \phi^{-1})^\star \circ \tau \circ (\phi \circ \psi)^\star  \\
& =   (\phi^{-1})^\star\circ (\psi^{-1})^\star\circ \tau \circ \psi^\star \circ \phi^\star \\
& = \phi_{\CA} \circ \psi_{\CA}(\tau). 
\end{align*}

\item It is easy to check that $\phi_{\CA}$ is a homomorphism of $\CA(G;A)$ as it is induced by conjugation by $\phi^\star$. Moreover, it is an automorphism as, by part two, the inverse of $\phi_{\CA}$ is $(\phi^{-1})_{\CA}$. 
\end{enumerate}
\end{proof}

\begin{corollary}
The map $\Phi: \Aut(G)\rightarrow \Aut(\CA(G;A))$, defined as $\Phi(\phi) = \phi_{\CA}$ for every $\phi \in \Aut(G)$, is a group homomorphism.  
\end{corollary}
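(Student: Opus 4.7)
The plan is to observe that this corollary is essentially an immediate reformulation of the preceding proposition in the language of group homomorphisms, so the proof reduces to two short verifications.

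First, I would check that $\Phi$ is well-defined as a map into $\Aut(\CA(G;A))$. By part 2 of the preceding proposition, for every $\phi \in \Aut(G)$ the map $\phi_{\CA}$ is an automorphism of $\CA(G;A)$, so indeed $\Phi(\phi) = \phi_{\CA} \in \Aut(\CA(G;A))$, as required.

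Second, I would verify the homomorphism property. Both $\Aut(G)$ and $\Aut(\CA(G;A))$ are groups under composition of functions. For any $\phi, \psi \in \Aut(G)$, part 1 of the preceding proposition yields directly
\[ \Phi(\phi \circ \psi) = (\phi \circ \psi)_{\CA} = \phi_{\CA} \circ \psi_{\CA} = \Phi(\phi) \circ \Phi(\psi), \]
which is exactly the required identity.

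There is no real obstacle here, since the content of the corollary coincides with part 1 of the preceding proposition. The genuinely interesting question, foreshadowed in the introduction, is not whether $\Phi$ is a homomorphism but rather how large its image is inside $\Aut(\CA(G;A))$, and in particular whether $\Phi$ descends to a nontrivial embedding of $\Aut(G)$ into $\Out(\CA(G;A)) = \Aut(\CA(G;A))/\Inn(\CA(G;A))$; that would require distinguishing conjugation by $\phi^\star$ from conjugation by an element of $\ICA(G;A)$, which is a strictly deeper issue than the present corollary.
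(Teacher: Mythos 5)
Your proof is correct and matches the paper's intent exactly: the corollary is stated without a separate proof precisely because it follows immediately from the two parts of the preceding proposition, which is what you invoke. Nothing further is needed.
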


When $G = \mathbb{Z}$, and $\phi \in \Aut(\mathbb{Z})$ is the only nontrivial automorphism of $\mathbb{Z}$, i.e. $\phi(k)=-k$, $\forall k \in \mathbb{Z}$, then $\phi_{\CA}$ is the so-called \emph{mirrored rule}, or \emph{reflection}, which is widely used in the study of elementary cellular automata (see \cite[Ch. 20]{Wolfram}). It was already proved in \cite[Sec. 3]{CRG20} that the mirrored rule is indeed an automorphism of $\CA(G;A)$. 

Recall that an automorphism $\alpha$ of a monoid $M$ is \emph{inner} if there exist an invertible element $a \in M$ such that $\alpha(x) = a^{-1} x a$, for all $x \in M$. In other words, the inner automorphisms of $M$ are the ones induced by conjugation by the elements in the group of units $U(M)$. Let $\text{Inn}(M)$ be the set of inner automorphisms of $M$. It is well-known that $\text{Inn}(M)$ is a normal subgroup of $\text{Aut}(M)$ and that $\text{Inn}(M) \cong U(M) / Z(U(M))$, where $Z(U(M))$ is the center of the group $U(M)$.   

\begin{theorem}\label{inner}
Let $\phi \in \Aut(G)$. If $\phi_{CA}$ is an inner automorphism of $\CA(G;A)$, then $\phi(z) = z$, for all $z \in Z(G) := \{ z \in G : zg=gz, \forall g \in G \}$.
\end{theorem}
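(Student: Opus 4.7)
The plan is to translate the hypothesis that $\phi_{\CA}$ is inner into a centrality condition inside $\IGCA(G;A)$, and then extract the fixed-point condition on $Z(G)$ by testing against shifts by central elements. Suppose there exists $\sigma \in \ICA(G;A)$ such that $\phi_{\CA}(\tau) = \sigma^{-1} \circ \tau \circ \sigma$ for every $\tau \in \CA(G;A)$. Combined with the definition of $\phi_{\CA}$, this gives
\[ (\phi^{-1})^\star \circ \tau \circ \phi^\star = \sigma^{-1} \circ \tau \circ \sigma, \quad \forall \tau \in \CA(G;A). \]
Using that $(\phi^{-1})^\star$ is the two-sided inverse of $\phi^\star$ (a direct consequence of Lemma \ref{L-1}), I would rearrange both sides to show that the element
\[ \alpha := \sigma \circ (\phi^{-1})^\star \in \IGCA(G;A) \]
commutes with every $\tau \in \CA(G;A)$.

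Next, I would identify the equivariance of $\alpha$. By Theorem \ref{composition}, $\alpha$ is a $(\phi^{-1} \circ \id_G)$-cellular automaton, that is, a $\phi^{-1}$-cellular automaton, so Lemma \ref{le-equivariance} yields
\[ h \cdot \alpha(x) = \alpha( \phi^{-1}(h) \cdot x), \quad \forall h \in G,\ x \in A^G. \]

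The decisive step is to test the commutation of $\alpha$ against shifts by central elements. For each $z \in Z(G)$, the map $\sigma_z : A^G \to A^G$ defined by $\sigma_z(x) := z \cdot x$ belongs to $\CA(G;A)$, with memory set $\{z^{-1}\}$ and local function $y \mapsto y(z^{-1})$; crucially, the centrality of $z$ is what guarantees that the description $x(z^{-1}h) = x(hz^{-1})$ is expressible uniformly in $h$. From $\alpha \circ \sigma_z = \sigma_z \circ \alpha$ we obtain $\alpha(z \cdot x) = z \cdot \alpha(x)$, while the equivariance above at $h = z$ gives $z \cdot \alpha(x) = \alpha(\phi^{-1}(z) \cdot x)$. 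Injectivity of $\alpha$ then forces $z \cdot x = \phi^{-1}(z) \cdot x$ for every $x \in A^G$; since $\vert A \vert \geq 2$, the shift action is faithful (as already invoked in the preceding lemma), and therefore $\phi^{-1}(z) = z$, i.e. $\phi(z) = z$.

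The only subtle point is the observation that left-shifts by central elements lie in $\CA(G;A)$; this is exactly where the hypothesis $z \in Z(G)$ enters and explains why the conclusion is restricted to the center rather than to all of $G$. Everything else is a formal consequence of the structural results of Sections 2 and 3, together with the faithfulness of the shift action.
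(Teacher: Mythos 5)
Your proposal is correct and follows essentially the same route as the paper: both arguments test the inner-automorphism hypothesis against the shifts $x \mapsto z \cdot x$ for $z \in Z(G)$ (which lie in $\CA(G;A)$ precisely because $z$ is central), then combine the equivariance of a star map with bijectivity and the faithfulness of the shift action. The only cosmetic difference is that you package the commutation into the single element $\alpha = \sigma \circ (\phi^{-1})^\star$ and use its $\phi^{-1}$-equivariance, whereas the paper first deduces $\phi_{\CA}(\tau_z) = \tau_z$ from the centrality of $\tau_z$ in $\CA(G;A)$ and then unwinds this to $\tau_z \circ \phi^\star = \phi^\star \circ \tau_z$; both reductions are valid.
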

\begin{proof}
Suppose that $\phi_{\CA}$ is an inner automorphism. Then exist $\sigma\in \ICA(G;A)$ such that $\phi_{\CA}(\tau) = \sigma^{-1}\tau \sigma$ for all $\tau\in \CA(G;A)$. For each $z \in Z(G)$, define $\tau_z : A^G \to A^G$ by $\tau_z(x) := z \cdot x$, for all $x \in A^G$. It its clear that $\tau_z$ is continuous and $\id_G$-equivariant, so $\tau_z \in \CA(G;A)$. Moreover, it is easy to check that $\tau_z$ commutes with every element of $\CA(G;A)$. Hence, 
\[  (\phi^{-1})^\star  \circ \tau_z \circ \phi^\star = \phi_{\CA}(\tau_z) = \sigma^{-1}\tau_z \sigma = \sigma^{-1}\sigma \tau_z = \tau_z.  \]
Then, for all $x \in A^G$,
\[ \tau_z \circ \phi^\star(x)  =  \phi^\star \circ \tau_z(x) \ \Rightarrow \ z \cdot \phi^\star(x) = \phi^\star(z \cdot x).  \]
By Lemma \ref{L-1},  
\[ \phi^\star(\phi(z) \cdot x) =  z \cdot \phi^\star(x) = \phi^\star(z \cdot x). \] 
As $\phi \in \Aut(G)$, then $\phi^\star$ is bijective, so $\phi(z) \cdot x = z \cdot x$ for all $x \in A^G$. Since $\vert A \vert \geq 2$, the action of $G$ on $A^G$ is faithful, so $\phi(z) = z$ for all $z \in Z(G)$. 
\end{proof}

\begin{corollary}
The mirrored rule is not an inner automorphism of $\CA(\mathbb{Z};A)$. \end{corollary}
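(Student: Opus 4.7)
The plan is to apply Theorem \ref{inner} directly by contrapositive. The mirrored rule is, by definition, the automorphism $\phi_{\CA}$ of $\CA(\mathbb{Z};A)$ associated with the nontrivial automorphism $\phi \in \Aut(\mathbb{Z})$ given by $\phi(k) = -k$ for all $k \in \mathbb{Z}$. Since $\mathbb{Z}$ is abelian, its center is the whole group, i.e. $Z(\mathbb{Z}) = \mathbb{Z}$.

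Theorem \ref{inner} tells us that if $\phi_{\CA}$ were an inner automorphism of $\CA(\mathbb{Z};A)$, then $\phi$ would need to fix every element of $Z(\mathbb{Z}) = \mathbb{Z}$ pointwise. However, $\phi(1) = -1 \neq 1$, so $\phi$ does not fix $1 \in \mathbb{Z}$. Therefore, $\phi_{\CA}$ cannot be an inner automorphism of $\CA(\mathbb{Z};A)$.

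There is no real obstacle here: the corollary is a one-line deduction from Theorem \ref{inner} once one observes that $\mathbb{Z}$ being abelian forces $Z(\mathbb{Z}) = \mathbb{Z}$, and that the defining automorphism of the mirrored rule is nontrivial on this center. The only thing to double-check is the identification of the mirrored rule with $\phi_{\CA}$ for this specific $\phi$, but this has already been recorded in the paragraph following the proposition.
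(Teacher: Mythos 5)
Your proof is correct and follows exactly the intended route: the paper leaves this corollary as an immediate consequence of Theorem \ref{inner}, using that $Z(\mathbb{Z})=\mathbb{Z}$ and that the reflection automorphism $\phi(k)=-k$ does not fix $1$. Nothing is missing.
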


Define the \emph{outer automorphism group} of a monoid $M$ by $\text{Out}(M) :=\Aut(M) / \text{Inn}(M)$.

\begin{corollary}
Suppose that $G$ is an abelian group. Then, the homomorphism $\Psi : \Aut(G) \rightarrow \text{Out}(\CA(G;A))$ given by $\Psi(\phi) = \phi_{\CA} \text{Inn}(\CA(G;A))$ is injective.  
 \end{corollary}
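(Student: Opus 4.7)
The plan is to observe that $\Psi$ is the composition of the group homomorphism $\Phi : \Aut(G) \to \Aut(\CA(G;A))$ from the previous corollary with the canonical quotient map $\pi : \Aut(\CA(G;A)) \to \text{Out}(\CA(G;A))$, and therefore $\Psi$ is itself a group homomorphism. To prove injectivity, it suffices to show that its kernel is trivial.

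Next, I would take an arbitrary $\phi \in \ker(\Psi)$, which by definition means that $\phi_{\CA} \text{Inn}(\CA(G;A)) = \text{Inn}(\CA(G;A))$, i.e.\ $\phi_{\CA}$ is an inner automorphism of $\CA(G;A)$. At this point, Theorem \ref{inner} applies directly and yields $\phi(z) = z$ for every $z \in Z(G)$.

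The hypothesis that $G$ is abelian now closes the argument immediately: since $Z(G) = G$, the equality $\phi(z)=z$ holds for every $z \in G$, so $\phi = \id_G$. Hence $\ker(\Psi)$ is trivial and $\Psi$ is injective.

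There is no real obstacle here, as the substantive work has already been done in Theorem \ref{inner}; the only point worth flagging is that the abelian hypothesis is used in exactly one place, namely to upgrade the conclusion ``$\phi$ fixes $Z(G)$ pointwise'' to ``$\phi$ is the identity''. For a non-abelian $G$, the same proof strategy would at most give injectivity modulo automorphisms of $G$ that act trivially on $Z(G)$, which is precisely the gap motivating the open question posed in the introduction.
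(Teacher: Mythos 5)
Your proof is correct and follows essentially the same route as the paper's: both reduce the question to showing that an automorphism of $G$ whose induced map $\phi_{\CA}$ is inner must be trivial, invoke Theorem \ref{inner} to conclude that such a $\phi$ fixes $Z(G)=G$ pointwise, and use the homomorphism property of $\phi \mapsto \phi_{\CA}$. The only cosmetic difference is that you argue via triviality of the kernel while the paper directly compares two elements $\phi_{\CA}$ and $\psi_{\CA}$ with the same coset; these are equivalent formulations.
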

\begin{proof}
Suppose that $ \phi_{\CA} \text{Inn}(\CA(G;A)) = \psi_{\CA} \text{Inn}(\CA(G;A))$ for some $\psi, \phi \in \Aut(G)$. Then $\psi_{\CA}^{-1} \circ \phi_{\CA}$ is an inner automorphism of $\CA(G;A)$. By Theorem \ref{inner}, $\psi^{-1} \circ \phi (z) = z$ for all $z \in Z(G)=G$, which shows that $\psi = \phi$.  
\end{proof}

The previous result shows that, when $G$ is abelian, $\text{Out}(\CA(G;A))$ contains a subgroup isomorphic to $\Aut(G)$. Inspired by this, we propose the following question:

\begin{question}
For a given abelian group $G$, is it possible to show that $\text{Out}(\CA(G;A))$ is isomorphic to $\Aut(G)$, or can we find an element of $\text{Out}(\CA(G;A))$ that is not induced by any automorphism of $G$? 
\end{question}


\section*{Acknowledgments}

The first author was supported by a CONACYT Basic Science Grant (No. A1-S-8013). The second and third authors were supported by CONACYT National Posgraduate Scholarships.


\end{document}